\documentclass[11pt, a4paper]{article}

\usepackage{amsmath}\usepackage{amsfonts}\usepackage{amssymb}\usepackage{amsthm}
\usepackage{graphicx}
\usepackage{ifthen}
\usepackage{lscape}
\usepackage{subfigure}
\usepackage{setspace}
\usepackage{subeqnarray}
\usepackage{url}
\usepackage{xspace}

\newenvironment{keywords}{\footnotesize{\bf Keywords: }}{}
\newenvironment{AMS}{\footnotesize{\bf AMS subject classification: }}{}
\usepackage[left=1in,top=1.5in,right=.97in,bottom=1in]{geometry}

\usepackage{verbatim}
\usepackage{mdwlist}
\usepackage{xcolor}
\usepackage{hyperref}
\usepackage{ulem}\normalem
\allowdisplaybreaks

\newtheorem{theorem}{Theorem}[section]

\newtheorem{remark}{Remark}[section]
\numberwithin{equation}{section}
\numberwithin{figure}{section}

\setcounter{secnumdepth}{3}

\long\def\symbolfootnote[#1]#2{\begingroup\def\thefootnote{\fnsymbol{footnote}}
\footnote[#1]{#2}\endgroup}

\renewcommand{\paragraph}[1]{}
\renewcommand{\paragraph}[1]{}
\usepackage{verbatim}
\usepackage{ifpdf}
\ifpdf
\else
\renewcommand{\includegraphics}[1]{\framebox{Graphics Placeholder}}
\renewcommand{\includegraphics}[2][1]{\framebox{Graphics Placeholder}}
\fi

\definecolor{changecol}{rgb}{0.7, 0, 0}
\newcommand{\change}[1]{{\color{changecol}{#1}}}
\renewcommand{\change}[1]{{#1}}
\definecolor{aacol}{rgb}{0.7, 0, 0}
\definecolor{ascol}{rgb}{0, 0, 0.7}
\definecolor{plcol}{rgb}{0, 0.5, 0}
\definecolor{refcol}{rgb}{0.2, 0.2, 0.2}

\author{Assyr Abdulle\thanks{Section of Mathematics, Swiss Federal Institute of Technology (EPFL), Station 8, CH-1015, Lausanne, Switzerland
        }
        \and
		Ping Lin\thanks{Division of Mathematics,
        University of Dundee, 23 Perth Road, Dundee, Scotland DD1 4HN, UK
        }
        \and
        Alexander V. Shapeev$^{\dagger,}$\thanks{Present address: 
        School of Mathematics, 206 Church St. SE, University of Minnesota, Minneapolis,
        MN 55455, US}
        }

\title{Numerical Methods for Multilattices\footnote{Published in {\it Multiscale Model. Simul.}, {\bf 10}(3): 696--726, 2012.}}

\newcommand{\smfrac}[2]{{\textstyle\frac{#1}{#2}}}
\newcommand{\eps}{{\epsilon}}
\newcommand{\bbR}{{\mathbb R}}
\newcommand{\bbZ}{{\mathbb Z}}
\newcommand{\bbN}{{\mathbb N}}
\newcommand{\calA}{{\mathcal A}}

\newcommand{\calL}{{\mathcal L}}
\newcommand{\calM}{{\mathcal M}}
\newcommand{\calU}{{\mathcal U}}
\newcommand{\calQ}{{\mathcal Q}}
\newcommand{\calR}{{\mathcal R}}

\newcommand{\calT}{{\mathcal T}}
\newcommand{\calP}{{\mathcal P}}

\newcommand{\bq}{{\mathbf q}}

\newcommand{\mF}{{\sf F}}
\newcommand{\mG}{{\sf G}}

\newcommand{\calTh}{{\mathcal T}_h}

\def\del{\delta\hspace{-1pt}}
\def\ddel{\delta^2\hspace{-1pt}}

\newcommand{\delE}{{\del E}}
\newcommand{\ddelE}{{\ddel E}}
\newcommand{\delPhi}{{\del\Phi}}
\newcommand{\deltildeE}{{\del_{u^h}\!\tildeE}}
\newcommand{\delR}{{\del R}}
\newcommand{\delPi}{{\delta\hspace{-0.5pt}\Pi}}

\newcommand{\qc}{{\rm qc}}
\newcommand{\mqc}{{\rm mqc}}

\newcommand{\hqc}{{\rm hqc}}

\renewcommand{\c}{{\rm c}}
\newcommand{\per}{{\rm per}}
\newcommand{\rep}{{\rm rep}}
\newcommand{\lin}{{\rm lin}}

\newcommand{\tildeE}{\tilde E}
\newcommand{\baru}{\bar u}
\def\<{\langle}
\def\>{\rangle}
\newcommand{\dd}{{\rm d}}
\newcommand{\dx}{\dd x}

\begin{document}
\sloppy
\maketitle

\begin{abstract}
Among the efficient numerical methods based on atomistic models, the quasicontinuum (QC) method has attracted growing interest in recent years.
The QC method was first developed for crystalline materials with Bravais lattice and was later extended to multilattices (Tadmor et al, 1999).
Another existing numerical approach to modeling multilattices is homogenization.
In the present paper we review the existing numerical methods for multilattices and propose another concurrent macro-to-micro method in the numerical homogenization framework.
We give a unified mathematical formulation of the new and the existing methods and show their equivalence.
We then consider extensions of the proposed method to time-dependent problems and to random materials.
\end{abstract}

\begin{keywords}
atomistic model,
quasicontinuum method,
multilattice,
homogenization,
multiscale method,
\end{keywords}

\begin{AMS}
65N30,    70C20,    74G15,    74G65\end{AMS}

\pagestyle{myheadings}
\thispagestyle{plain}

\section{Introduction}

In some applications of solid mechanics, such as modeling cracks, structural defects, or nanoelectromechanical systems, the classical continuum description is not suitable, and one is required to utilize an atomistic description of materials.
However, full atomistic simulations are prohibitively expensive, hence one needs to coarse-grain the problem.
The quasicontinuum (QC) method \cite{TadmorPhillipsOrtiz1996} is one of the most efficient methods of coarse-graining the atomistic statics.
The idea behind QC is to introduce piecewise affine constraints for the atoms in regions with smooth deformation and use the Cauchy--Born rule to define the energy of the corresponding groups of constrained atoms.
To formulate the QC method for multilattice crystals one must account for relative shifts of Bravais lattices of which the multilattice is comprised \cite{TadmorSmithBernsteinEtAl1999}.

The QC method is a multiscale method capable of coupling atomistic and continuum description of materials.
It is intended to model an atomistic material in a continuum manner in the regions where the deformation is smooth and use the fully atomistic model only in the small neighborhood of defects, thus effectively reducing the degrees of freedom of the system.
Originally, the QC method was developed for crystalline materials with a (single) Bravais lattice \cite{TadmorPhillipsOrtiz1996}, and the convergence of a few variants of the method has been analyzed under some practical assumptions (see, e.g., \cite{DobsonLuskinOrtner2010b, VanKotenLiLuskinEtAl2012, Lin2003, Lin2007, LuMing, MingYang2009, OrtnerShapeev2011, OrtnerSuli2008}).
The QC method is based on the so-called Cauchy--Born rule (see, e.g., \cite{BlancLeBrisLions2007a, EMing2007, Ericksen2008, FrieseckeTheil2002}) which states that the energy of a certain volume of a material can be approximated through the deformation energy density, which is computed for a representative atom, assuming that the neighboring atoms follow a uniform deformation.
Later, QC was extended to multilattices \cite{TadmorSmithBernsteinEtAl1999} (a multilattice is a union of a number of Bravais lattices) based on the improved Cauchy--Born rule \cite{Stakgold1950} which accounts for relative shifts between the Bravais lattices.
Examples of such materials include diamond cubic \mbox{Si}, HCP metals (stacking two simple hexagonal lattices with a shift vector) like \mbox{Zr}, ferroelectric materials, salts like sodium chloride, and intermetallics like \mbox{NiAl}.
More recent developments of QC for multilattices also include adaptive choice of representative cell of multilattices \cite{DobsonElliottLuskinEtAl2007}.
It appears that no rigorous analysis is available so far for the multilattice QC except for the authors' preprint \cite{AbdulleLinShapeev2010}.

In the present work we propose a treatment of multilattices within the framework of numerical homogenization.
Homogenization techniques for partial differential equations (PDEs) with multiscale coefficients
are known to be successful for obtaining effective equations with coefficients properly averaged out
\cite{BensoussanLionsPapanicolaou1978}.
Finite element methods based on homogenization theory have been pioneered by
Bab\u{u}ska \cite{Babuska1976} and have attracted growing attention in recent years
(see \cite{Abdulle2009, Abd11b, EEL2007, EfendievHou2009, GeersKouznetsovaBrekelmans2010} for textbooks or review papers).
Following the ideas of \cite{BensoussanLionsPapanicolaou1978}, we use formal homogenization techniques to describe
the coarse-graining of multilattices and based on that, propose a macro-to-micro numerical algorithm which we call the {\it homogenized QC (HQC)} method.
Here the term {\it macro-to-micro} refers to coupling macroscopic and microscopic scales for the
same physical model, but not coupling models, like in the nonlocal QC.
The macro-to-micro method developed in this paper follows the framework of the finite element 
heterogenenous multiscale method (FE-HMM) \cite{Abdulle2009, Abd11b, EEL2007}, a numerical
method coupling a macroscopic finite element method (FEM) defined on a macroscopic mesh
with effective data recovered on the fly by microscopic FEM on patches centered at suitable quadrature points
within the macroscopic mesh. This method belongs to the family of numerical homogenization methods
as it provides a homogenized numerical solution, but unlike classical methods, the effective data are not precomputed
but supplemented by micro computations when and where needed during the macro computation. The HMM 
provides an efficient way of coupling micro and macro solvers and a suitable framework for
a priori and a posteriori analysis taking into account numerical approximation at different scales \cite{Abdulle2009, Abd11b}.

We give a unified mathematical description and establish equivalence between the homogenized QC, the multilattice QC (MQC) of \cite{TadmorSmithBernsteinEtAl1999}, and the finite element method applied to the continuously homogenized equations (see \cite{AbdulleLinShapeev2010, FishChenLi2007} and references therein for homogenization of atomistic media).
Despite the formal equivalence, we find value in formulating MQC within the homogenization framework and, more generally, in connecting the existing developments in upscaling atomistic models and classical numerical homogenization.
First, this framework allows us to apply the numerical analysis techniques developed for continuum numerical homogenization such as the finite element heterogeneous multiscale method \cite{Abdulle2009, EEL2007} to the multilattice QC method (see our preprint \cite{AbdulleLinShapeev2010} for an example of such application).
Second, numerical homogenization techniques can be used to upscale the atomistic model in both, time and space, which makes it promising for modeling and especially analyzing motion of atomistic materials at macro- and micro-scale \cite{EEL2007, FishChenLi2007, MillerTadmor2002}.
In this work we demonstrate such an application of HQC to a slow (i.e., with no thermal fluctuation) dynamics of an atomistic crystal (Section \ref{sec:unsteady}).
Also, numerical techniques based on the  homogenization framework are well suited for materials described on stochastic lattices 
at the atomistic level 
such as  polymers \cite{BaumanOdenPrudhomme2009} and glasses (see, e.g., \cite{AlicandroCicaleseGloria2011, BlancLeLions2007_stochastic_lattices}),
or for materials with properties (such as, e.g., conductivity, stiffness, etc.$\mathstrut$)
described by random parameters at the continuum level \cite{Tor005}.
We give an example of application of numerical homogenization to a stochastic material in Section \ref{sec:stochastic}.
We note that the idea of applying numerical homogenization methods to atomistic media has appeared in the literature before \cite{BaumanOdenPrudhomme2009, ChenFish2006, Chung2004, ChungNamburu2003, FishChenLi2007}.
\\

The paper is organized as follows.
We present the atomistic model in Section \ref{sec:problem_formulation}, and in particular we give a simplified illustrative model in Section \ref{sec:problem_formulation:simplified}.
The simplified model will be useful to better illustrate application of the coarse-graining methods and to draw analogies between the concepts discussed in this paper and their counterparts in the classical continuum homogenization.
We then present the quasicontinuum method in Section \ref{sec:QC}.
In Section \ref{sec:homogenization} we present a formal homogenization technique applied to the atomistic equations.
In Section \ref{sec:HQC} we present the HQC method---a concurrent macro-to-micro algorithm based on the discrete homogenization.
Section \ref{sec:equivalence} is devoted to showing the equivalence of the following three methods applied to multilattices: the HQC method, the MQC method, and the finite element method applied to continuously homogenized equations.
In Section \ref{sec:multilattice} we illustrate an application of HQC to a multilattice.
We emphasize that the HQC is formulated in such a way that it allows for a straightforward extension to non-crystalline materials if the microstructure is known; an example of such extension is given in Section \ref{sec:stochastic}.
In Section \ref{sec:unsteady} we apply the proposed macro-to-micro method to a long-wave unsteady evolution of a 1D multilattice crystal.
Concluding remarks are given in Section \ref{sec:conclusion}.
The commonly used notations are collected in the appendix.

\section{Problem Formulation}\label{sec:problem_formulation}
The focus of the present study is on correct treatment of atomistic materials with spatially oscillating or inhomogeneous local properties.

\subsection{Equations of Equilibrium}\label{sec:problem_formulation:full}

We describe the formulation of the problem of finding an equilibrium of an atomistic material in the periodic setting.
We consider the periodic boundary conditions for simplicity, in order to avoid difficulties arising from presence of the boundary of the atomistic material.
Nevertheless, it should be noted that the numerical method and the algorithm proposed in the present work can be applied to Dirichlet, Neumann, or other boundary conditions.

\subsubsection{Deformation}
Consider an atomistic material occupying a region $\Omega=[0,1)^d$ in its reference (i.e., undeformed) configuration and extended periodically outside of $\Omega$.
The set of positions of atoms in the reference configuration is
\[
\calM = \Omega\cap\bigcup_{\alpha=0}^{m-1} \big(\eps\bbZ^d+\eps p_\alpha\big),
\]
where $p_\alpha\in[0,1)^d$ is a shift vector of $\alpha$-th species of atoms in the reference configuration; in total we have $m$ species of atoms.
We assume that $p_\alpha\ne p_\beta$ for $\alpha\ne\beta$ and, for convenience, $p_0=0$.

We collect these shift vectors into the set $\calP := \{p_\alpha \,:\ \alpha=0,\ldots,m-1\}$.
Thus, if we denote a Bravais lattice in $\Omega$ by
\[
\calL = \Omega\cap\eps\bbZ^d,
\]
then we can write $\calM = \calL + \eps\calP$.
This identity means that $\calM$ consists of $\eps\calP$ repeated periodically with the period $\eps$.
We will call $\calM$ a multilattice.
The sets $\calL$, $\calP$, and $\calM$ are illustrated in Figure \ref{fig:LPM}.

When the material experiences a deformation, the atom positions become $x + u(x)$, where $u(x)$ is the displacement.
We assume that $u(x)$ is periodic; i.e., $u(x+a)=u(x)$ for all $a\in\bbZ^d$.
The space of all periodic displacements is denoted by $\calU_\per(\calM)$.
Since we consider only the systems invariant with respect to translation in space, we will also need the space of displacements with zero average, $\calU_\#(\calM)$ (see Appendix \ref{sec:notations:spaces} for the precise definitions).

\begin{figure}
\begin{center}
\includegraphics{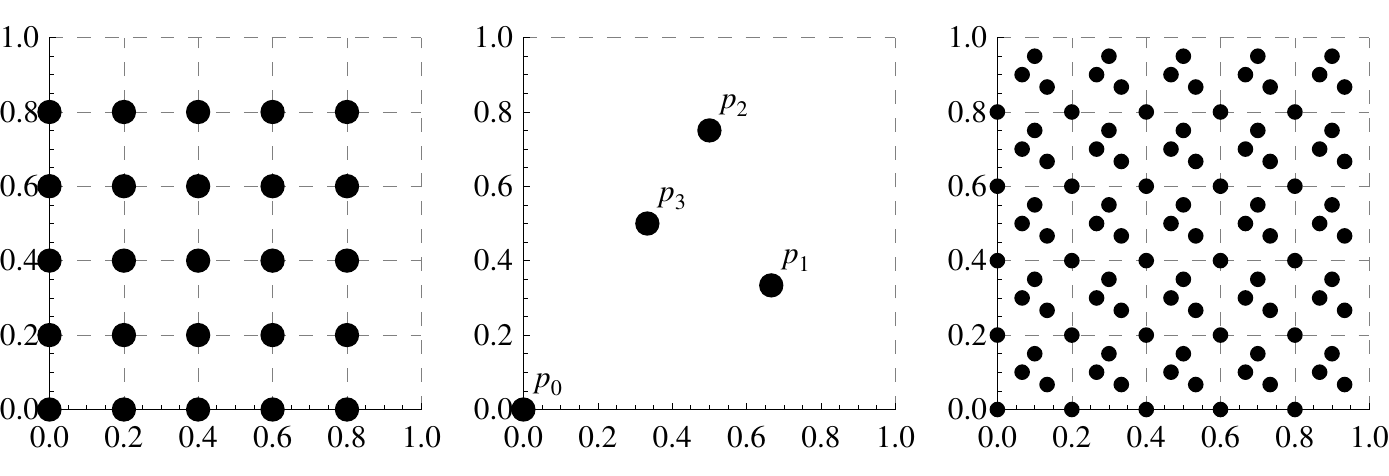}
\caption{Illustration of $\calL$ (left), $\calP=\{p_0, p_1, p_2, p_3\}$ (middle), and $\calM$ (right); here $\eps=1/5$.}
\label{fig:LPM}
\end{center}
\end{figure}

\subsubsection{Interaction}
We assume a general (multibody) finite-range interaction between atoms.
For each atom $x\in\calM$ we introduce its ``interaction neighborhood''---a set of vectors $\calR_\eps(x)$ such that $\{x+\eps r\,:\,r\in\calR_\eps(x)\}$ are the atoms that $x$ interacts with.
The energy of an atom $x\in\calM$ is denoted by $V_\eps(D_{\calR_\eps(x)} u(x); x)$, where $D_{\calR_\eps} u = (D_r u)_{r\in\calR_\eps}$ (see Appendix \ref{sec:notations:vector_indexed}) is a collection of discrete directional derivatives of $u$ corresponding to the set of neighbors $\calR_\eps$ (these notations were first introduced in \cite{HudsonOrtner2012}).
The discrete derivative in direction $r$ of $u$ evaluated at $x\in\calM$ is defined as $D_r u(x) := \frac{u(x+\eps r)-u(x)}\eps$.
The needed properties and definitions of discrete directional derivatives can be found in Appendix \ref{sec:notations:operators}, and more details on discrete directional derivatives in Appendix \ref{sec:notations:vector_indexed}.

Thus, the interaction energy of the displacement $u$ is given by the interaction potential $V_\eps$ as
\[
E(u)
= \frac1{\#(\calM)}\sum_{x\in\calM} V_\eps(D_{\calR_\eps(x)} u(x); x)
= \big\< V_\eps(D_{\calR_\eps} u) \big\>_{\calM}
,
\]
where $\<g\>_S$ denotes the average value of a function $g$ defined on a discrete set $S$.

The subscript $\eps$ in $V_\eps$ and $\calR_\eps$ indicates that these objects depend nonsmoothly on $x$: indeed, the interaction energy and the interaction neighborhood may depend on the species of atoms $\alpha$ for $x\in\calL+\eps p_\alpha$.
For instance, we can consider a Lennard--Jones potential with atom-dependent parameters:
\begin{equation}
\label{eq:problem_formulation:LJ}
V_\eps(D_{\calR_\eps} u; x) = \sum_{r\in\calR_\eps} s_{x,x+\eps r}\Big(-2\, \big(\smfrac{|r+D_r u|}{\ell_{x,x+\eps r}}\big)^{-6}+\big(\smfrac{|r+D_r u|}{\ell_{x,x+\eps r}}\big)^{-12}\Big),
\end{equation}
where $s_{x,x+\eps r}$ and $\ell_{x,x+\eps r}$ are, respectively, the strength and the equilibrium distance of interaction of atoms $x$ and $x+\eps r$.

We assume that the interaction neighborhood $\calR_\eps(x+\eps p_\alpha)$ and the interaction potential $V_\eps(\bullet,x+\eps p_\alpha)$ for $x\in\calL$ depend only on $\alpha$, the particular species of atoms, but do not depend on $x$; we therefore write $\calR_\eps(x+\eps p_\alpha)=:\calR_{\eps,\alpha}$ and $V_\eps(\bullet,x+\eps p_\alpha) =: V_{\eps,\alpha}$.
This assumption states, effectively, an $\eps$-periodicity of $\calR_\eps$ and $V_\eps(\bullet,x)$.
Then, we can use the following form of the energy:
\begin{align} \notag
E(u)
=~&
\bigg\<
	\frac1m \sum_{\alpha=0}^{m-1}
	V_\eps(D_{\calR_\eps(x+\eps p_\alpha)} u(x+\eps p_\alpha); x+\eps p_\alpha)
\bigg\>_{x\in\calL}
\\ =~& \label{eq:E_alt}
\bigg\<
	\frac1m \sum_{\alpha=0}^{m-1}
	V_{\eps,\alpha}(D_{\calR_{\eps,\alpha}} u(x+\eps p_\alpha))
\bigg\>_{x\in\calL},
\end{align}
where we used a more verbose notation for averaging of a function $g$ defined on a discrete set $S$, $\<g\>_S=:\<g(x)\>_{x\in S}$.
This expression for the energy will be used to write down the energy of the MQC method in a familiar way (see \eqref{eq:Emqc-simplified}).

\begin{remark}
One can exercise the freedom in choosing $\calP$ by assuming that $\calP = \big\{0,\smfrac1m e_1, \ldots,\smfrac{m-1}m e_1\big\}$, where $e_1\in\bbR^d$ is the respective unit vector.
In this case $\calM$, up to a dilatation, is a simple lattice (although with several species of atoms).
This allows one to choose $\calR_\eps(x)$ independent of $x$ (and also $\calR_{\eps,\alpha}$ independent of $\alpha$), and leave only interaction potential $V_\eps$ to depend on $x$.

We will not pursue this in the present work; however, such notations would significantly simplify presentation of the MQC (Section \ref{sec:MQC}) and would allow one to conveniently write the equilibrium equation in a strong form (in particular, in Section \ref{sec:homogenization:fast-and-slow}).
Our motivation for not pursuing this is to show that the homogenization and the numerical method can, in principle, be generalized to the case when $\calR_\eps$ depends on $x$.
This is important when modeling non-crystalline materials with no underlying periodic structure.
\end{remark}

\subsubsection{External Force}
The potential energy of the external force $f=f(x)$ is
\[
-F(u) = - \<f, u \>_\calM,
\]
where by $\< w, v \>_\calM := \< w \cdot v \>_\calM$ we denote a scalar product of $w,v\in\calU_\per(\calM)$.
(To be precise, it is an inner product on $\calU_\#(\calM)$ and a semi-inner product on $\calU_\per(\calM)$.)
The forces $f=f(x)$ are applied as ``dead loads''; i.e., they are independent of actual atom positions $x+u$.
For the problem to be well-posed, the sum of all forces per period is assumed to be zero; i.e., $\<f\>_\calM = 0$.

\subsubsection{Equation of Equilibrium}
We denote the total potential energy of the atomistic system by
\[
\Pi(u) = E(u) -F(u).
\]
A displacement $u\in\calU_\#(\calM)$ is a stable equilibrium if it is a local minimizer of $\Pi$, which implies that $u$ is a critical point of $\Pi$:
\begin{equation}
\label{eq:original_equation}
\<\delPi(u), v\>_\calM := \frac{\dd}{\dd t} \Pi(u+t v)\big|_{t=0} = 0
\quad \forall v\in\calU_\#(\calM).
\end{equation}
We assume that the function $\Pi(u)$ is smooth enough, and hence $\<\delPi(u), v\>_\calM$ is a linear functional with respect to $v\in\calU_\#(\calM)$, which justifies identification of $\delPi(u)$ with an element of $\calU_\#$.
Alternatively, the problem of finding the equilibrium configuration of atoms can formally be written as
\[
\frac{\partial \Pi}{\partial u(x)} = 0 \quad \forall x\in\calM,
\]
if we consider $\Pi$ as a function of finite number of variables $u(x)$, $x\in\calM$.

A physical potential energy $\Pi(u)$ has to be invariant with respect to a uniform translation of atoms.
Hence, we pose the following additional condition,
\begin{equation}
\<u\>_\calM = 0,
\label{eq:u-averages-to-zero}
\end{equation}
which is necessary (but may not be sufficient) for the equations \eqref{eq:original_equation} to have a \change{locally unique} solution.

The equilibrium equations \eqref{eq:original_equation} together with the additional condition \eqref{eq:u-averages-to-zero} can be written in variational form: find $u \in \calU_{\per}(\calM)$ such that
\begin{subeqnarray} \label{eq:variational_equation}
	\<\delE(u), v\>_\calM & = & F(v)
	\quad \forall v\in \calU_{\per}(\calM)
\\
	\<u\>_\calM & = & 0,
\label{eq:variational_problem_generic}
\end{subeqnarray}
where the functional derivative $\delE: \calU_{\per}(\calM) \to \calU_{\per}(\calM)$ is computed as
\begin{align}
\label{eq:Psi}
\<\delE(u), v\>_\calM
=~&
\Big\<\sum_{r\in\calR_\eps} V'_{\eps,r}(D_{\calR_\eps} u), D_r v\Big\>_\calM
,
\end{align}
and $V'_{\eps,r}(D_{\calR_\eps} u)$ denotes, effectively, the gradient of a scalar function $V_\eps$ with respect to its vector-valued variable $D_r u$ (note the difference with $V_{\eps, \beta}$ introduced in \eqref{eq:E_alt}).
Here and in what follows, with a slight abuse of notations, we keep the sign of summation over $r\in\calR_\eps$ inside the triangular brackets of the scalar product.

\subsection{A Simple Illustrative Example}\label{sec:problem_formulation:simplified}

The following simplified model will be useful in illustrating the concepts presented in this paper (namely, we will give a simplified version of the quasicontinuum method, in Section \ref{sec:QC-heterogeneous:failure}, and illustrate an application of the homogenization, in Section \ref{sec:homogenization:simplfied}).
The reader can find more examples involving a simplified model in our preprint \cite{AbdulleLinShapeev2010}.

Assume one space dimension, $d=1$; the domain $\Omega=[0,1)$, the shift vectors in the reference configuration
\begin{equation} \label{eq:simplified_model:P}
\calP = \big\{0,\smfrac1m,\ldots,\smfrac{(m-1)}m\big\},
\end{equation}
the multilattice
\[
\calM
=\bigcup_{\alpha=0}^{m-1}(\eps \bbZ + \eps \smfrac{\alpha}{m}) \cap \Omega
=\smfrac\eps m\bbZ\cap\Omega
,
\]
and the basic lattice $\calL=\eps\bbZ\cap\Omega$.
We further assume $\calR=\{\smfrac1m\}$ (nearest neighbor interaction only) and consider the ``linear spring model'' with the atomistic potential
\begin{equation} \label{eq:simplified_model}
V_\eps(D_r u; x) = \psi_\eps(x)\, \frac{(D_r u)^2}2
,
\end{equation}
with $r=\smfrac1m$.
Such a system can be interpreted as a system of masses located at positions $x + u$ and connected with ideal springs with spring constants $k_\alpha = \psi_\eps(x)/\eps$ (where $\alpha$ and $x$ are related here through $x\in \eps \smfrac{1+\alpha}{m} + \eps\bbZ$), as illustrated in Figure \ref{fig:springs_heterogeneous}.

\begin{figure}
\begin{center}
\includegraphics{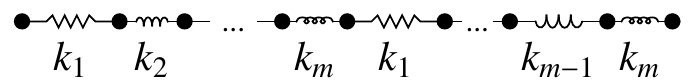}
\caption{Illustration of a simplified atomistic model}
\label{fig:springs_heterogeneous}
\end{center}
\end{figure}

The equilibrium equation then becomes
\begin{equation}
	\<\psi_\eps D_r u, D_r v\>_\calM
	= \<f, v\>_\calM
.
\label{eq:problem_1d_linear}
\end{equation}

If we want to find an equilibrium of a very large atomistic system, we need to coarse-grain these equations.
In Section \ref{sec:QC} we present the quasicontinuum method, one of the methods of numerical coarse-graining of such a system.

We can notice that the equation \eqref{eq:problem_1d_linear} closely resembles the continuum equation
\begin{equation}\label{eq:intro:continuum_energy}
\int_\Omega A\big(\smfrac x\eps\big) \frac{\dd u}{\dd x} \frac{\dd v}{\dd x} \dd x
=
\int_\Omega f v \dd x
,
\end{equation}
for which the homogenization theory is well-developed.
Here $A\big(\smfrac x\eps\big)$ is an oscillating coefficient defining the local energy density.
The crystal is, by definition, a periodic arrangement of atoms, which translates into periodicity of $A\big(\smfrac x\eps\big)$.
Non-crystalline solid materials, in contrast, correspond to random arrangements of atoms, which is analogous to random (non-periodic) $A\big(\smfrac x\eps\big)$.
The spring constants varying on the scale of $\eps$ are analogous to $A\big(\smfrac x\eps\big)$ varying on the scale of $\eps$.
It is well known from homogenization theory (\cite{BensoussanLionsPapanicolaou1978}) that the solution $u$ of \eqref{eq:intro:continuum_energy} converges weakly
in the $H^1$ norm to a homogenized solution $\bar u,$ solution to an equation similar to \eqref{eq:intro:continuum_energy} but
with an effective (homogenized) tensor $\bar A(x).$ We note that, in general, strong convergence holds only for the $L^2$ norm.

Based on this similarity between the continuum and the discrete energy, we apply the formal homogenization techniques to the discrete atomistic equations in Section \ref{sec:homogenization} and based on that formulate the HQC method---a concurrent macro-to-micro algorithm (similar to FE-HMM) based on the discrete homogenization.
The method is formulated in such a way that it allows for a straightforward extension to non-crystalline materials if the microstructure is known; an example of such extension is given in Section \ref{sec:stochastic}. 

In Section \ref{sec:unsteady} we apply the proposed macro-to-micro method to a long-wave unsteady evolution of a 1D multilattice crystal.
The long-wave unsteady evolution is analogous to a continuum motion corresponding to a Hamiltonian
\[
\smfrac12 \int_\Omega \bigg[m\big(\smfrac x\eps\big) \Big(\frac{\dd u}{\dd t}\Big)^2
+  A\big(\smfrac x\eps\big) \Big(\frac{\dd u}{\dd x}\Big)^2
\bigg]\dd x,
\]
where $u=u(t,x)$ is assumed to have no fast (i.e., on the time scale of $\smfrac1\eps$) oscillations.

\section{Quasicontinuum (QC) Method}\label{sec:QC}

Traditionally, numerical methods such as the finite element method (FEM) are applied to continuum equations which can then be solved on a computer.
The characteristic feature of the atomistic models we are discussing in the paper is their discreteness, with a number of degrees of freedom often too large to keep track of each individual atom.
Therefore, similarly to FEM, the ideas of reducing the number of degrees of freedom are used for atomistic models as well. The difference is that now the reduction is done from a large but finite number of degrees of freedom to a smaller number of degrees of freedom.
The QC method is a representative of such methods.
We first present its simple-lattice version.
The QC method consists of reducing the number of degrees of freedom of the atomistic system by choosing a coarse mesh of nodal atoms and assuming that the positions of the other atoms can be reconstructed by a linear interpolation.

It should be noted that we discuss here only the {\it local} version of QC which is equivalent to applying FEM to the Cauchy--Born continuum model of elasticity.
We are not considering coupling the continuum and discrete models in this paper.

\subsection{Notation}\label{sec:QC:notations}

Assume a partition $\calTh$ of the domain $\Omega$ into simplicial elements $T$, which we will conveniently refer to as the {\it mesh}.
Normally, $\#(\calTh)\ll\#(\calL)$ (recall that by $\#(\bullet)$ we denote the number of elements in a set).
By $|T|$ we denote the Lebesgue measure of $T$.
The QC solution will be denoted by $u^h$.

The space of piecewise linear discrete vector-functions is denoted by
\begin{equation}
\calU^h_\per = \big\{
	u^h\in \big(W^{1,\infty}_\per(\Omega)\big)^d\,:\ u^h|_T\in P_1(T) ~\forall T\in\calTh\big\}
,
\label{eq:UH-space}
\end{equation}
and the space of piecewise constant vector-functions as
\begin{displaymath}
\calQ^h_\per = \big\{
	q^h\in \big(L^{\infty}_\per(\Omega)\big)^d\,:\ q^h|_T\in P_0(T) ~\forall T\in\calTh\big\}.
\end{displaymath}

\subsection{QC for simple lattice}\label{sec:QC:homogeneous}

In this (and only this) subsection we make the simple lattice assumption.
That is, we assume that $m=1$ and hence $\calM=\calL$.
In particular, in this subsection we write $V_\eps(D_\calR u; x)=V(D_\calR u)$ and $\calR_\eps(x)=\calR$ as they no longer depend on $x$.

The QC method \cite{TadmorPhillipsOrtiz1996} aims at finding a minimizer of
\[
\Pi(u^h)
=
\big\<V\big(D_\calR u^h\big)\big\>_\calL
- F(u^h)
\]
in $\calU^h_\per$.
Minimizing $\Pi(u^h)$ in $\calU^h_\per$ indeed reduces the number of degrees of freedom of the system from $O(\#(\calM))$ to $O(\#(\calTh))$ (recall that $\#(\calTh)\ll \#(\calM)$).
However, one must still spend $O(\#(\calM))$ operations to compute the effective forces on the reduced degrees of freedom.
In order to have an efficient numerical method (i.e., a method with $O(\#(\calT_h))$ operations) one introduces an approximation to $\Pi(u^h)$ which is called the  {\it local QC method} \cite{TadmorPhillipsOrtiz1996} (hereinafter referred to as the QC method).

The local QC method first approximates $D_r u^h$ with $\nabla_r u^h$ within each $T$ (hence the name of the method: the nonlocal finite difference $D_r u^h$ is approximated with the ``local'' directional derivative $\nabla_r u^h$).
Then for each $x\in T$ one has
\begin{align*}
V(D_\calR u^h)
\approx~&
V(\nabla_\calR u^h)
= W\big(\nabla u^h|_T\big),
\end{align*}
where $W(\mF) := V(\mF\calR)$ is the Cauchy--Born energy density associated with a displacement gradient $\mF$ (see \eqref{eq:vector_indexed} to obtain the precise definition of $\mF\calR$).
Second, the local QC method changes the sum over $x\in\calL$ effectively to integration over $\Omega$; i.e.,
\[
E^\qc(u^h) :=
\int_\Omega W\big(\nabla u^h\big) \dx
=
\sum_{T\in\calTh} |T|\, W\big(\nabla u^h|T\big)
.
\]

The variational formulation of the QC method is thus
\begin{equation}
\label{eq:qc}
\int_\Omega
\sum_{r\in\calR} \del W\big(\nabla_r u^h\big) \!:\! \nabla_r v^h
\dx
= F^h(v^h)
\quad \forall v^h\in\calU^h_\per
,
\end{equation}
where $\del W$ denotes the derivative of $W$, the semicolon denotes the inner product of matrices.
and $F^h(v^h)$ is some approximation to $\<f,v^h\>_\calM$.

Error analysis of the local QC yields a first-order convergence of the deformation gradient (i.e., roughly speaking, of a quantity $\|u^h-u\|_{W^{1,p}(\Omega)}$) with respect to sizes of triangles $T\in\calT_h$ (see, e.g., \cite{Lin2003, Lin2007, OrtnerShapeev2011}).
A more refined analysis shows that the local QC can be second-order accurate \cite{DobsonLuskinOrtner2010b, EMing2007, MakridakisSuli}.

\subsection{Multilattice QC}\label{sec:MQC}

Approximating the exact minimizer of $\Pi(u)$ with a piecewise linear $u^h \in \calU^h_\per$ may be accurate enough for the case when the interatomic interaction $V_\eps(\bullet, x)$ varies smoothly with $x$ (more precisely, if the mesh $\calTh$ resolves the variations in $V_\eps(\bullet, x)$) well.
However, for many materials with multilattice structure (examples of such materials were given in the introduction) the piecewise linear approximation of the displacement $u$ is not accurate.

In this subsection we present the Multilattice QC (MQC) method first introduced in \cite{TadmorSmithBernsteinEtAl1999} which is designed to handle the multilattice microstructure.

Define the space of QC displacements of the multilattice $\calM$:
\begin{equation}
\calU^{h,q} = \bigg\{
	u^h + \sum_{\alpha=1}^{m-1} q^h_\alpha w_\alpha
	\,:~
	u^h\in \calU^h_\per
	,~q^h_\alpha\in \calQ^h_\per
	,~\alpha=1,\ldots,m-1
\bigg\},
\label{eq:QC_general_complex-lattice_space}
\end{equation}
where $q^h_\alpha$ are the deformed shift vectors (recall that $p_\alpha$ are the undeformed shift vectors) and $w_\alpha : \calM \to \bbR$ are the associated basis functions defined as 
\begin{equation}\label{eq:MQC_basis_funcs_def}
w_\alpha|_{\calL+\eps p_\beta} = \delta_{\alpha\beta}
\quad (\alpha,\beta=0,\ldots,m-1)
,
\end{equation}
with $\delta_{\alpha\beta}$ denoting the Kronecker delta.
It should be noted that the domain of definition of functions in $\calU^{h,q}$ is $\calM$, whereas the functions in $\calU^h$ are defined on the entire $\bbR^d$.
For a more detailed introduction of the space of QC deformations, refer to \cite{AbdulleLinShapeev2010}.
In each element $T\in\calTh$ we thus have $m-1$ nonzero shift vectors $q^h_\alpha$, and we set $q^h_0:=0$.
We denote
\[
\bq^h := (q^h_1,\ldots,q^h_{m-1}) \in (\calQ^h_\per)^{m-1}.
\]

Next, form the interaction energy $E(u)$ with $u = u^h + \sum_{\alpha=1}^{m-1} q^h_\alpha w_\alpha \in\calU^{h,q}$:
\begin{align*}
E(u)
=~&
E\Big(u^h + \sum_{\alpha=1}^{m-1} q^h_\alpha w_\alpha\Big)
\\ = ~&
\Big\<
	V_\eps\Big(
		D_{\calR_\eps(x)} \Big({ u^h(x) + \sum_{\alpha=1}^{m-1} q^h_\alpha(x) w_\alpha(x)}\Big)
		; x
	\Big)
\Big>_{x\in\calM}
\\ = ~&
\Big\<
	\frac1m \sum_{\beta=0}^{m-1}
	V_\eps\Big(
		D_{\calR_\eps(x+\eps p_\beta)} \Big({ u^h(x+\eps p_\beta) + \sum_{\alpha=1}^{m-1} q^h_\alpha(x+\eps p_\beta) w_\alpha(x+\eps p_\beta)}\Big)
		; x+\eps p_\beta
	\Big)
\Big>_{x\in\calL}
\\ = ~&
\Big\<
	\frac1m \sum_{\beta=0}^{m-1}
	V_{\eps,\beta}\Big(
		D_{\calR_{\eps,\beta}} { u^h(x+\eps p_\beta) + \sum_{\alpha=1}^{m-1} D_{\calR_{\eps,\beta}} q^h_\alpha(x+\eps p_\beta) w_\alpha(\eps p_\beta)}
\Big)
\Big>_{x\in\calL}
,
\end{align*}
where we used periodicity of $V_\eps$ (see \eqref{eq:E_alt}) and $w_\alpha$ (which follows directly from the definitions of $w_\alpha$ and $\calM$).
Similarly to the simple-lattice QC, we perform a local quasicontinuum approximation which consists of:
(i) changing the summation over $x\in\calL$ to the integration over $\Omega$,
(ii) approximating $D_r u^h$ with $\nabla_r u^h$, and
(iii) approximating $q^h_\alpha(x+\eps p_\beta)$ with $q^h_\alpha(x)$:
\begin{align*}
E(u)
\approx~&
\int_\Omega
	\frac1m \sum_{\beta=0}^{m-1}
	V_{\eps,\beta}\Big(
		\nabla_{\calR_{\eps,\beta}} u^h + \sum_{\alpha=1}^{m-1} q^h_\alpha D_{\calR_{\eps,\beta}} w_\alpha(\eps p_\beta)
\Big)
\dx
\\ =~&
\sum_{T\in\calTh} |T|\,
	\frac1m \sum_{\beta=0}^{m-1}
	V_{\eps,\beta}\Big(
		\big(\nabla u^h|_T\big)\calR_{\eps,\beta} + \sum_{\alpha=1}^{m-1} \big(q^h_\alpha|_T\big) D_{\calR_{\eps,\beta}} w_\alpha(\eps p_\beta)
\Big)
\\=:~& \tildeE^\mqc(u^h, \bq^h)
,
\end{align*}
where we used the identity $\nabla_{\calR_{\eps,\beta}} u^h|_T = \big(\nabla u^h|_T\big)\calR_{\eps,\beta}$; cf.\ \eqref{eq:vector_indexed}.
\begin{remark}
The expression for $\tildeE^\mqc(u^h, \{q^h_\alpha\})$ can be further simplified by denoting the species of atoms $\eps\beta + \calR_\eps$ as $\calA_{\eps,\beta}$ (formally $\calA_{\eps,\beta} := (a_{\beta, r})_{r\in\calR_{\eps,\beta}}$ where $a_{\beta, r}\in\{0,\ldots,m-1\}$ is defined so that $p_{a_{\beta, r}} \in p_\beta + r + \bbZ^d$).
Then the sum in $\tildeE^\mqc$ can be simplified as the difference between the shift vectors of interacting atoms:
\begin{align*}
\sum_{\alpha=1}^{m-1} \big(q^h_\alpha|_T\big) D_{\calR_{\eps,\beta}} w_\alpha(\eps p_\beta)
=~&
\bigg(
\sum_{\alpha=1}^{m-1} \big(q^h_\alpha|_T\big) D_r w_\alpha(\eps p_\beta)
\bigg)_{r\in\calR_{\eps,\beta}}
\\=~&
\Big(
\big(q^h_{a_{\beta, r}}|_T\big) \big(w_\alpha(\eps p_{a_{\beta, r}})-w_\alpha(\eps p_\beta)\big)
\Big)_{r\in\calR_{\eps,\beta}}
\\=~&
\Big(
\big(q^h_{a_{\beta, r}}|_T\big) - \big(q^h_{\beta}|_T\big)
\Big)_{r\in\calR_{\eps,\beta}}
\end{align*}
This yields
\begin{equation}\label{eq:Emqc-simplified}
\tildeE^\mqc(u^h, \bq^h)
=
\sum_{T\in\calTh} |T|\,
	\frac1m \sum_{\beta=0}^{m-1}
	V_{\eps,\beta}\Big(
		\big(\nabla_{\calR_{\eps,\beta}} u^h + q^h_{\calA_{\eps,\beta}} - q^h_\beta\big)\big|_T
	\Big)
.
\end{equation}

\end{remark}

In the next step, the shift vectors $q_\alpha$ are eliminated from \eqref{eq:QC_general_complex-lattice_space} by requiring that the variation of $\tildeE^\mqc(u^h, \{q_\alpha\})$ with respect to $q_\gamma$ in each triangle be zero:
\begin{equation}
\label{eq:QC_general_shift-vectors-equation}
\begin{split}
	\frac1m \sum_{\beta=0}^{m-1}
	\sum_{r\in\calR_\beta}
	V'_{\eps,\beta,r}\Big(
		\big(\nabla u^h|_T\big)\calR_{\eps,\beta} + \sum_{\alpha=1}^{m-1} \big(q^h_\alpha|_T\big) D_{\calR_{\eps,\beta}} w_\alpha(\eps p_\beta)
\Big)
	D_r w_\gamma(\eps p_\beta) = 0
& \\
\quad (\gamma=1,2,\ldots,m-1)
& .
\end{split}
\end{equation}
The equations \eqref{eq:QC_general_shift-vectors-equation} form a system of $m-1$ equations for $m-1$ unknowns $(q_\alpha)_{\alpha=1}^{m-1}$ in each $T$.
A solution of this system gives us the shift vectors $q_\alpha$ depending (as a rule, nonlinearly) only on the displacement gradient:
\[
\bq^h|_T = \bq\big(\nabla u^h|_T\big).
\]
Note that the function $\bq(\mF)$ does not depend on $T$, unless different periodic materials are considered in different elements $T$.

\begin{remark}\label{rem:no_discussion_of_existence}
The function $\bq(\mF)$ determines the lattice microstructure of a material under the macroscopic displacement gradient $\mF$.
Often there is more than one lattice microstructure corresponding to a particular $\mF$.
Well-posedness of equations \eqref{eq:QC_general_shift-vectors-equation} is studied in \cite{EMing2007} under the assumption that the entire atomistic system is $H^1$-stable, and in \cite{AbdulleLinShapeev2011_analysis} under the assumption of dominance of nearest-neighbor interaction in 1D.

In different applications there may be different \change{additional} conditions for choosing the unique $\bq(\mF)$ (this can be the condition of a global minimum of the microenergy, or proximity to a given microfunction).
\change{In this paper we will not focus on such additional conditions, and}
will \change{therefore} not discuss in detail the existence and uniqueness of solutions of the respective microscopic and macroscopic equations.
\change{Thus}, at this point, by $\bq(\mF)$ we formally denote one of the solutions of \eqref{eq:QC_general_shift-vectors-equation}, or leave $\bq(\mF)$ undefined if \eqref{eq:QC_general_shift-vectors-equation} admits no solutions.
In Section \ref{sec:equivalence} we will take a slightly more formal account of existence and uniqueness.
\end{remark}

We now form a QC energy with $q_\alpha$ eliminated:
\begin{equation} \label{eq:Emqc}
E^\mqc(u^h)
:=
\tildeE^\mqc\big(u^h, \bq\big(\nabla u^h\big)\big)
.
\end{equation}
The QC equation of equilibrium now reads: find $u^h\in\calU^h_\per$ such that
\[
\<\delE^\mqc(u^h), v^h\>_\Omega
=
F^h(v^h)
\quad\forall v^h\in\calU^h_\per,
\]
where $F^h(v^h)$ is some approximation to $\<f,v^h\>_\calM$.
The function $u^h$ gives a macroscopic displacement of the material, and one needs to compute $u^h + \sum_{\alpha=1}^{m-1} q^h_\alpha w_\alpha$ for the microstructure.
We note that since $q_\alpha$ were found by letting the variation of $\tildeE^\mqc(u^h, q_\alpha)$ with respect to $q_\alpha$ be zero, we have
\begin{equation}
\label{eq:Eintqc_simplified}
\delE^\mqc(u^h) = \deltildeE^\mqc\big(u^h, \bq\big(\nabla u^h\big)\big).
\end{equation}

\begin{remark}\label{rem:concurrent_coupling}
Instead of eliminating $\bq^h=\bq(\nabla u^h)$, one could also look for a critical point (or a minimizer) of the energy $\tildeE^\mqc(u^h, \bq^h)$ with respect to both $u^h$ and $\bq^h$ \change{(see, e.g., \cite{SorkinElliottTadmor})}.
\end{remark}

\subsection{Application of QC to the Simplified Model} \label{sec:QC-heterogeneous:failure}

We illustrate an application of QC to the simplified 1D model \eqref{eq:simplified_model} for two species of atoms (i.e., $m=2$), $\psi_\eps(0)=\psi_1$, $\psi_\eps\big(\smfrac\eps2\big)=\psi_2$.

If we approximate the exact solution with a piecewise affine displacement $u^h\in\calU^h_\per$ (i.e., without introducing shift vectors, as done in the simple-lattice QC) then we will find the approximate energy
\[
\sum_{T\in\calTh} |T|\,
	\frac12 \bigg[
		\psi_1\, \frac{(\nabla_r u^h)^2}2
		+
		\psi_2\, \frac{(\nabla_r u^h)^2}2
	\bigg]
=
\sum_{T\in\calTh} |T|\,
	\frac{\psi_1+\psi_2}{2}\, \frac{(\nabla_r u^h)^2}2.
\]
Here $\tilde\psi^0 = \frac{\psi_1 + \psi_2}{2}$ is the wrong effective spring constant, since if the two springs in series are replaced with two identical springs with the effective spring constant $\psi_0$ then $\psi_0 = \frac{2\,\psi_1\psi_2}{\psi_1+\psi_2}$ (see, e.g., \cite{ChenFish2006}).

If instead we allow for nonzero shift vector $q_1$ then the corresponding MQC energy \eqref{eq:Emqc-simplified} is
\[
\tildeE^\mqc(u^h, q^h_1)
=
\sum_{T\in\calTh} |T|\,
	\frac12 \bigg[
		\psi_1\, \frac{(\nabla_r u^h + q^h_1)^2}2
		+
		\psi_2\, \frac{(\nabla_r u^h - q^h_1)^2}2
	\bigg]
\]
with $r=\smfrac12$.
The strong form of \eqref{eq:QC_general_shift-vectors-equation} in this case can be obtained by differentiating the above expression with respect to $q^h_1$ in each $T$:
\[
\psi_1 \big((\nabla_r u^h + q^h_1)|_T\big) -
\psi_2 \big((\nabla_r u^h - q^h_1)|_T\big)
=
0,
\]
from where we find
\[
q^h_1|_T = \frac{\psi_2-\psi_1}{\psi_1+\psi_2} \big(\nabla_r u^h|_T\big)
.
\]
Substituting this back into the MQC energy (cf.\ \eqref{eq:Emqc}) yields
\begin{align*}
E^\mqc(u^h)
=~&
\sum_{T\in\calTh} |T|\,
	\frac12 \bigg[
		\psi_1\, \frac12 \Big(\frac{2\,\psi_2}{\psi_1+\psi_2}\,\big(\nabla_r u^h|_T\big)\Big)^2
		+
		\psi_2\, \frac12 \Big(\frac{2\,\psi_1}{\psi_1+\psi_2}\,\big(\nabla_r u^h|_T\big)\Big)^2
	\bigg]
\\ =~&
\sum_{T\in\calTh} |T|\, \frac{2\,\psi_1\psi_2}{\psi_1+\psi_2}\,\frac{(\nabla_r u^h|_T)^2}2
,
\end{align*}
where the effective spring constant $\psi_0 = \frac{2\,\psi_1\psi_2}{\psi_1+\psi_2}$ is now computed correctly.

\section{Homogenization of Atomistic Media}\label{sec:homogenization}

\begin{figure}
\begin{center}
\includegraphics{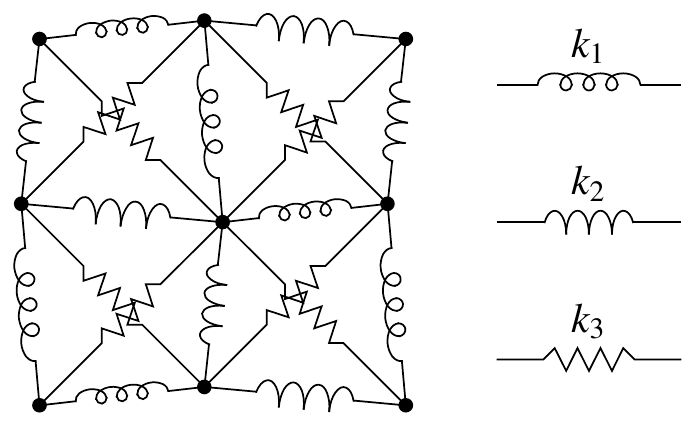}
\caption{Illustration of a 2D model problem with heterogeneous interaction.}
\label{fig:2d-springs}
\end{center}
\end{figure}

We now present another coarse graining strategy based on homogenization.
We derive below the homogenized model of the atomistic material which will be the basis for formulating and analyzing a quasicontinuum method for multilattices.

We note that there are existing works applying formal homogenization techniques to upscaling atomistic equations, see \cite{ChenFish2006, ChenFish2006a, FishChenLi2007} and references therein.
In the present section we derive the upscaled equations for a general model of interaction in many dimensions as opposed to the pairwise interaction in 1D assumed in the upscaled equations \cite{ChenFish2006, ChenFish2006a, FishChenLi2007}.
The upscaled equations are derived using a formal asymptotic expansion.
Rigorous error bounds for the homogenized equations can be found in the preprint \cite{AbdulleLinShapeev2010} for the case of the linear 1D nearest-neighbor interaction and in \cite{AbdulleLinShapeev2011_analysis} for the case of a 1D finite-range nonlinear interaction.

\subsection{Asymptotic expansion}\label{sec:homogenization:fast-and-slow}
In order to take into account the local variation of the atomistic interaction we think of the displacement as depending on a fast and a slow scale $u(x)\sim u(x,x/\eps)$.
We define $x\in\bbR^d$, the macro (``slow'') variable, and $y \in \bbZ^d+\calP$, the micro (``fast'') variable related to $x$ as $y=x/\eps$, and consider a series of functions
$u^n:\bbR^d\times (\bbZ^d+\calP)\rightarrow \bbR^d$ indexed by $n=0,1,2\ldots$
As we consider the local structure and interaction to be periodic, we assume that the functions $u^n$ are $\calP$-periodic in the fast variable; i.e., they satisfy for all $(x,y)\in \Omega\times\calP$
\[
u^n(x,y+j) = u^n(x,y),
\quad\forall j\in\bbZ^d \]
while the behavior with respect to $x$ is similar as considered in the previous sections
\[
u^n(x+i,y) = u^n(x,y),
\quad\forall i\in\bbZ^d .
\]
We then consider the asymptotic expansion
\begin{equation}
\label{equ:asympt_exp}
u (x) \sim \big(u^0(x) + \eps u^1(x, y) + \eps^2 u^2(x, y) + \ldots\big)\big|_{y=x/\eps}
\quad\forall x\in\calM
.
\end{equation}
Notice that we directly assume that the homogenized solution, $u^0$, does not depend on $y$.

We now proceed as in the ``classical homogenization" \cite{Bakhvalov1974, BensoussanLionsPapanicolaou1978, S'anchez-Palencia1980}
and insert the ansatz \eqref{equ:asympt_exp} into \eqref{eq:variational_equation}:
\begin{align*}
\Big\<
\Big(\sum_{r\in\calR_\eps} V'_{\eps,r}\big(
D_{x,\calR_\eps} u^0+\eps D_{x,\calR_\eps} T_{y,\calR_\eps} u^1+ D_{y,\calR_\eps}u^1
+\ldots
\big)&
\\,
D_{x,r} T_{y,r} v + \eps^{-1} D_{y,r} v\Big)&\Big|_{y=x/\eps} \Big\>_\calM
= \<f, v\>_\calM
,
\end{align*}
where the test functions $v=v(x,y)$ are continuous and smooth in $x\in\Omega$ and discrete in $y\in\calP$.
Here we used the relation \eqref{eq:full_and_partial} to expand the full derivative $D_r$ through partial derivatives $D_{x,r}$, $D_{y,r}$, and the translation operator $T_{y,r}$, and used the collection-of-derivatives notation $D_\calR$ (see Appendix \ref{sec:notations:vector_indexed} for more details).

We then extend the equation on the entire $\calM\times\calP$:
\begin{equation}\label{eq:homogenization:extended_discrete}
\begin{split}
\Big\<
\sum_{r\in\calR_\eps} V'_{\eps,r}\big(
D_{x,\calR_\eps} u^0+\eps D_{x,\calR_\eps} T_{y,\calR_\eps} u^1+ D_{y,\calR_\eps}u^1
+\ldots
\big)&
\\
,
D_{x,r} T_{y,r} v + \eps^{-1} D_{y,r} v & \Big\>_{\calM\times\calP}
= \<f, v\>_{\calM\times\calP}
.
\end{split}
\end{equation}

We now expand this equation in powers of $\eps$.
For that, we use the approximation $D_{x,r}\approx \nabla_{x,r}$ (i.e., we essentially use Taylor series to expand $D_{x,r}$), and the notations $V_\eps(\bullet; x) = V(\bullet; y)$ and $\calR_\eps(x) = \calR(y)$, and change a sum over $\calM$ to an integral:
\begin{equation} \label{eq:hom:scales_separated}
\begin{split}
\Big\<
\sum_{r\in\calR} V'_r\big(
\nabla_{x,\calR} u^0+\eps \nabla_{x,\calR} T_{y,\calR} u^1+ D_{y,\calR}u^1+\ldots
\big)&
\\
, \nabla_{x,r} T_{y,r} v + \eps^{-1} D_{y,r} v & \Big\>_{\Omega\times\calP}
= \<f, v\>_{\Omega\times\calP}
,
\end{split}
\end{equation}
where $\<\bullet\>_{\Omega\times \calP}$ is a short-hand for $\int_\Omega \<\bullet\>_\calP \dx$.

We first collect the $O(\eps^{-1})$ terms in \eqref{eq:hom:scales_separated}:
\begin{align*}
\Big\<
\sum_{r\in\calR} V'_r\big(\nabla_{x,\calR} u^0+ D_{y,\calR}u^1\big)
, D_{y,r} v \Big\>_{\Omega\times\calP}
= 0
.
\end{align*}
As usual in homogenization we write the solution of this equation (of course, equipped with the zero-average boundary conditions) as $u^1(x,y) = \chi(\nabla_x u^0(x); y)+\baru^1(x)$, where $\chi=\chi(\mF; y)\,:\,\bbR^{d\times d}\times\calP\to\bbR^d$ solves
\begin{equation} \label{eq:chi_def}
\text{find $\chi(\mF, \bullet)\in\calU_\#(\calP)$ s.t.}\quad
\Big\<
\sum_{r\in\calR} V'_r\big(\mF \calR + D_{y,\calR}\chi(\mF)\big)
, D_{y,r} \sigma \Big\>_{\calP}
= 0
\quad \forall \sigma\in\calU_\#(\calP).
\end{equation}
As earlier, $\chi(\mF, y)$ can be formally understood as some solution to \eqref{eq:chi_def}, similarly to the shift vector function $\bq^h(\mF)$ discussed in Remark \ref{rem:no_discussion_of_existence}.
We will establish the formal equivalence of $\chi(\mF, y)$ and $\bq^h(\mF)$ in Theorem \ref{thm:methods-are-equivalent}, hence the results in the cited references \cite{AbdulleLinShapeev2011_analysis, EMing2007} are applicable to well-posedness of \eqref{eq:chi_def} also.

To obtain the equation for the homogenized solution $u^0(x)$, we collect the $O(\eps^{0})$ terms in \eqref{eq:hom:scales_separated} and use the test function $\bar{v}$ of $x$ only:
\begin{align*}
\Big\<
\sum_{r\in\calR} V'_r\big(
\nabla_{x,\calR} u^0 + D_{y,\calR}u^1
\big)
, \nabla_{x,r} \bar{v} \Big\>_{\Omega\times\calP}
= \<f, \bar{v}\>_{\Omega\times\calP}
.
\end{align*}
This leads to the homogenized equation
\begin{equation} \label{eq:homogenized}
\<\delPhi^0(\nabla_x u^0), \nabla_x \bar{v}\>_\Omega = \<f,\bar{v}\>_\Omega,
\end{equation}
or equivalently, in the strong form $- \nabla_x\cdot \delPhi^0(\nabla_x u^0) = f(x)$, where $\delPhi^0 \,:\,\bbR^{d\times d}\to\bbR^{d\times d}$ satisfies
\begin{equation}
\label{eq:delPhi0-def}
\delPhi^0(\mF) =
\Big\<
\sum_{r\in\calR(y)} V'_r\big(\mF \calR + D_{y,\calR}\chi(\mF)\big) \, r^{\!\top}
\Big\>_{y\in\calP}.
\end{equation}
Thus, we obtained the equation for the homogenized displacement $u^0$ with the homogenized tensor $\delPhi^0$.
Equation \eqref{eq:homogenized} needs to be supplemented with boundary conditions, for instance by requiring that $u^0$ is periodic and has zero average.

As an illustrative example, in the case of a pair interaction potential we can write 
$V(D_{\calR(y)}u;y)=\sum_{r\in \calR(y)}\Phi_r(D_r u;y)$ (cf. the Lennard--Jones potential in (\ref{eq:problem_formulation:LJ})),
consequently,
\begin{displaymath} \delPhi^0(\mF) =
\Big\<
\sum_{r\in\calR(y)} \Phi'_r\big(\mF r + D_{y,r}\chi(\mF)\big) \, r^{\!\top}
\Big\>_{y\in\calP}.
\end{displaymath}

\begin{remark}\label{rem:homogenization:discrete}
In the above formal arguments we assumed, for simplicity, that the external force $f=f(x)$ is non-oscillating (i.e., effectively does not depend on $y$) and is defined on all of $\Omega$.
We emphasize that oscillatory external forces, (of the form $f(x,y)|_{y=x/\eps}$) could also be considered.
The homogenized equation would then depend on a proper average of the external forces.
The assumption that $f$ is defined on the entire $\Omega$ can later be relaxed once the homogenized equations are discretized on a finite element mesh.
\end{remark}

\begin{remark}
Instead of upscaling the original discrete problem \eqref{eq:variational_equation} to a continuous problem of nonlinear elasticity \eqref{eq:homogenized}, one can consider an alternative approach where the upscaled model is discrete.

For instance, one can approximate \eqref{eq:homogenization:extended_discrete} by taking discrete $x\in\calL$ and approximating $D_{x,r}\bullet \approx (D_x\bullet) r$, where $D_x u(x) \in\bbR^{d\times d}$ is the discrete gradient of $u\in\calU_\per(\calL)$ at the point $x\in\calL$ defined as $(D_x u(x))e_k = D_{x,e_k} u(x)$, $k=1,\ldots,d$, $e_k$ is the $k$-th standard basis vector of $\bbR^d$.
Following the described above procedure of asymptotic expansion one can derive the following upscaled equation
\begin{equation} \label{eq:discr_homogenized}
\<\delPhi^0(D_x u^0), D_x v\>_\calL = \<f, v\>_\calL
\quad\forall v\in\calU_\#(\calL)
,
\end{equation}
where $\delPhi^0$ is defined by \eqref{eq:delPhi0-def}, the same equation as for the continuum homogenization.
The equation \eqref{eq:discr_homogenized} is upscaled in the sense that $\delPhi^0$ no longer depends on the fast variable $y$, and we can apply the standard QC to it.
The reader can refer to our preprint \cite{AbdulleLinShapeev2010} for a similar approach.
An advantage of the discrete homogenization is that it is not required to assume a continuous force $f$ to derive \eqref{eq:discr_homogenized}.
\end{remark}

\subsubsection*{Underlying Homogenized Energy}

We claim that, formally, the function $\delPhi^0(\mF)$ defined by \eqref{eq:delPhi0-def} is the derivative of the following function
\begin{equation}\label{eq:Phi0-def}
\Phi^0(\mF) :=
\big\<
V\big(\mF \calR + D_{y,\calR}\chi(\mF)\big)
\big\>_{y\in\calP}
,
\end{equation}
where $\chi=\chi(\mF)$ is some solution to \eqref{eq:chi_def}.

Indeed, assuming enough regularity of $V$ and $\chi$, we can compute the variation of \eqref{eq:Phi0-def} with respect to $\mF$:
\begin{equation}\label{eq:delPhi0-variation}
\delPhi^0(\mF_0) \!:\! \mG
=
\Big\<
\sum_{r\in\calR} V'_r\big(\mF \calR + D_{y,\calR}\chi(\mF_0)\big)
	\cdot (\mG r + D_{y,\calR} \del\chi(\mF) \!:\! \mG)
\Big\>_{y\in\calP}
.
\end{equation}
Since $\del\chi(\mF) \!:\! \mG \in \calU_\#(\calP)$, the second term in \eqref{eq:delPhi0-variation} drops due to \eqref{eq:chi_def} and we have
\[
\delPhi^0(\mF) \!:\! \mG
=
\Big\<
\sum_{r\in\calR} V'_r\big(\mF \calR + D_{y,\calR}\chi(\mF)\big)
	\cdot \mG r
\Big\>_{y\in\calP}
=
\Big\<
\sum_{r\in\calR} V'_r\big(\mF \calR + D_{y,\calR}\chi(\mF)\big) r^{\!\top} \!:\! \mG
\Big\>_{y\in\calP}
,
\]
which is consistent with \eqref{eq:delPhi0-def}.

Hence, the equations \eqref{eq:homogenized} can be written as
\[
\<\delE^0(u^0), v\>_\Omega = \<f, v\>_\Omega,
\]
where
\begin{equation}
\label{eq:homogenization:generalizations:E}
E^0(u^0)
:=
\int_\Omega \Phi^0(\nabla u^0)\dx
.
\end{equation}
The fact that the homogenized equations have an underlying energy may be important in some applications where, for instance, one chooses to use nonlinear conjugate gradient algorithms or needs to check for stability of numerical solutions.

\subsection{Application of Homogenization to the Simplified Model}\label{sec:homogenization:simplfied}

In order to make the steps of the above formal homogenization technique more transparent, we apply it to the simplified model \eqref{eq:problem_1d_linear}, written in a strong form as
\[
	D_{-r} (\psi_\eps(x) D_r u(x)) = f(x)
	\quad \forall x\in\calM
	,
\]
where $\psi(y) := \psi_\eps(\eps y)$ and $r:=\smfrac1m$ is fixed throughout this subsection.
We calculate the application of the full derivative $D_r$ (see Appendix \ref{sec:notations:operators} for the precise definition) to \eqref{equ:asympt_exp}:
\begin{align*}
D_r u(x,y)
=~&
(D_{x,r} T_{y,r} + \smfrac1\eps D_{y,r}) (u^0(x) + \eps u^1(x,y) + \ldots)
\\ =~&
D_{x,r} u^0(x) + D_{y,r} u^1(x,y) + \eps D_{x,r} T_{y,r} u^1(x,y) + \ldots
\end{align*}
and hence insert \eqref{equ:asympt_exp} into \eqref{eq:variational_equation}:
\[
	(D_{x,-r} T_{y,-r} + \smfrac1\eps D_{y,-r}) \big(
		\psi(y) (D_{x,r} u^0(x) + D_{y,r} u^1(x,y) + \ldots)
	\big) = f(x)
	\quad \forall x\in\calM,~\forall y\in\calP
,
\]
and change the discrete derivative with respect to $x\in\calM$ to the continuum derivative with respect to $x\in\bbR$:
\[
	(\nabla_{x,-r} T_{y,-r} + \smfrac1\eps D_{y,-r}) \big(
		\psi(y) (\nabla_{x,r} u^0(x) + D_{y,r} u^1(x,y) + \ldots)
	\big) = f(x)
	\quad \forall x\in\bbR,~\forall y\in\calP
.
\]

Collecting the $O(\eps^{-1})$ terms in this equation yields
\[
	D_{y,-r} \big(
		\psi(y) (\nabla_{x,r} u^0(x) + D_{y,r} u^1(x,y))
	\big) = f(x)
	\quad \forall x\in\bbR,~\forall y\in\calP
\]
and we can formally write the solution to this equation as
\[
u^1(x,y) = \chi(\nabla_x u^0(x)),
\]
where
the cell problem \eqref{eq:chi_def} in the strong form reads
\[
D_{y,-r} \big(\psi\, (Fr + D_{y,r} \chi(F))\big) = 0.
\]

For our simplified model, the cell problem admits the exact solution
\begin{equation}
D_{y,r} \chi(F) = \frac{C}\psi - Fr.
\label{eq:linear-nn_exact_DYchi}
\end{equation}
with $C = Fr\,\<1/\psi\>_\calP$, and $\calP$ given by \eqref{eq:simplified_model:P}.
The homogenized energy density is therefore
\begin{equation}
\Phi^0(F) = \big\<\psi \smfrac12 (Fr + D_{y,r} \chi)^2\big\>_\calP
=
\Big\<\psi \smfrac12 \big(Fr + \smfrac{C}\psi - Fr\big)^2\Big\>_\calP
= \smfrac12\,\frac{C^2}{\<1/\psi\>_\calP}
=
\<1/\psi\>_\calP^{-1}\,\frac{(Fr)^2}2,
\label{eq:linear-nn_exact:homogenized-tensor}
\end{equation}
which yields the homogenized energy
\[
E^0(u^0) = \int_0^1 \<1/\psi\>_\calP^{-1} \frac{(\nabla_r u^0)^2}2\, \dx
,
\]
with the correct form of the effective spring constant $\psi^0 = \<1/\psi\>_\calP^{-1}$, given by the harmonic average of $\psi$.

We note that this homogenization procedure, and the result that the effective energy density coefficient is in the form of a harmonic mean of the original coefficient, are well known for PDEs \cite[Chap.\ 1]{BensoussanLionsPapanicolaou1978}.
The expression for $\psi^0$ is also in agreement with the one found in Section \ref{sec:QC-heterogeneous:failure} for $m=2$.

\section{Homogenized QC}\label{sec:HQC}

We formulate a numerical macro-to-micro method for treating multilattices, which we call the homogenized quasicontinuum method (HQC).
We introduce HQC in the framework of numerical homogenization. For the case of materials with known periodic structure (i.e., crystalline materials)
the HQC method will be shown to be equivalent to applying finite elements to the homogenized equations (see Theorem \ref{thm:methods-are-equivalent}).

We emphasize that HQC can be generalized to non-crystalline materials and to time-dependent zero-temperature and, possibly, finite-temperature problems.
Indeed, in Section \ref{sec:stochastic} we give an application of HQC to a stochastic material and in Section \ref{sec:unsteady} we present an application of HQC to a 1D time-dependent zero-temperature evolution.
In addition, the HQC serves a convenient framework for the error analysis \cite{AbdulleLinShapeev2011_analysis, AbdulleLinShapeev2010}.

We present the HQC algorithm assuming that the microstructure is a function of the macroscopic displacement.
A reformulation analogous to the concurrent coupling of \cite{SorkinElliottTadmor} is also possible (cf.\ also Remark \ref{rem:concurrent_coupling}).

\subsection{HQC Method}\label{sec:HQC:method}
The method will be presented using macro-to-micro framework as used in some numerical homogenization procedures
\cite{Abdulle2009, EEL2007, GeersKouznetsovaBrekelmans2010, MieheBayreuther2007, TeradaKikuchi2001}.
We present the method for the case when the external force $f=f_\eps$ may be microstructure-dependent.

\subsubsection{Macroscopic affine displacement}
We again assume a partition $\calTh$ of the domain $\Omega$ into simplicial elements $T$, recall the definition of the space $\calU^h_\per$, \eqref{eq:UH-space}, and introduce its subspace of zero-mean functions $\calU^h_\#\subset\calU^h_\per$.

\subsubsection{Sampling Domains}\label{sec:HQC:method:sampling_domains}
We choose a representative position $x_T^\rep\in\calL$ and a sampling domain $S_T^\rep := x_T^\rep + \eps\calP$ associated with each $T\in\calTh$.
The sampling domain is normally chosen inside $T$ (the mesh can be highly refined in certain regions and therefore some sampling domains $S_T^\rep$ may be bigger than $T$).

The sampling domains have the associated operator of averaging over the sampling domain, $\<\bullet\>_{x\in S_T^\rep}$ and the functional space $\calU_\#(S_T^\rep) = \calU_\#(\eps\calP)$ (see \eqref{eq:Uper} for the precise definition).

\subsubsection{Energy and Macro Nonlinear Form}

Define the atomistic interaction energy of the HQC method
\begin{equation}
\label{eq:Ehqc_def}
E^\hqc(u^h)
:=
\sum_{T\in\calTh} |T|
\big\<V_\eps(D_{\calR_\eps} R_T(u^h))\big\>_{x\in S_T^\rep},
\end{equation}
where
$R_T(u^h)$, defined by \eqref{eq:HQC:microproblem}, is the microfunction constrained by $u^h$ in the sampling domain $S_T^\rep$.

The functional derivative of the above energy reads
\begin{equation}
\<\delE^\hqc(u^h), v^h\>_\Omega
=
\sum_{T\in\calTh} |T|
\Big\<\sum_{r\in\calR_\eps}V'_{\eps,r}(D_{\calR_\eps} R_T(u^h)), D_r\, \delR_T(u^h)\,v^h\Big\>_{x\in S_T^\rep},
\label{eq:HQC:bilinear-form}
\end{equation}
where $\delR_T(u^h)$ is the functional derivative of the reconstruction $R_T(u^h)$ defined below.

\subsubsection{Microproblem}\label{HQC:nonlinear:cell}
Given a function $u^h\in \calU_\per^h,$ $R_T(u^h)$
is a function such that $R_T(u^h)-u^h_\lin\in \calU_\#(S_T^\rep)$
and
\begin{equation}\label{eq:HQC:microproblem}
\Big\<\sum_{r\in\calR_\eps}V'_{\eps,r}(D_{\calR_\eps} R_T(u^h)),~ D_rs\Big\>_{x\in S_T^\rep}
=0
\quad \forall s\in \calU_\#(S_T^\rep)
,
\end{equation}
where $u^h_\lin$ is an affine extrapolation of $u^h|_T$ over the entire $\bbR^d$.
If $S_T^\rep \subset T$ then $u^h_\lin$ can be substituted with $u^h$.

\begin{remark}\label{rem:nonlinear_reconstruction_stable_equilibrium}
	When modeling essentially nonlinear phenomena (e.g., martensite-austenite phase transformation), one should require that the microstructure corresponds to a stable equilibrium.
	That is, one should require, in addition to \eqref{eq:HQC:microproblem}, that $w=R_T(u^h)-u^h_\lin\in \calU_\#(S_T^\rep)$ is a local minimum of $\<V_\eps(D_{\calR_\eps} (u^h_\lin+w))\>_{x\in S_T^\rep}$ \cite[p.\ 238]{TadmorSmithBernsteinEtAl1999}.
\end{remark}

\begin{remark}\label{rem:linear_reconstruction}
In the case of linear interaction, the reconstruction $R_T$ is a linear function and hence $\delR_T(u^h)v^h = R_T(v^h)$, which makes the derivative of the HQC energy \eqref{eq:HQC:bilinear-form} take the form
\[
\<\delE^\hqc(u^h), v^h\>_\Omega
=
\sum_{T\in\calTh} |T|
\Big\<\sum_{r\in\calR_\eps}V'_{\eps,r}(D_{\calR_\eps} R_T(u^h)), D_r R_T(v^h)\Big\>_{x\in S_T^\rep}.
\]
\end{remark}

\begin{remark}\label{rem:simplify-the-bilinear-form}
The functional derivative of the HQC energy \eqref{eq:HQC:bilinear-form} can equivalently be written as
\begin{equation}
\<\delE^\hqc(u^h), v^h\>_\Omega
=
\sum_{T\in\calTh} |T|
\Big\<\sum_{r\in\calR_\eps}V'_{\eps,r}(D_{\calR_\eps} R_T(u^h)), (\nabla_r v^h|_T)\Big\>_{x\in S_T^\rep},
\label{eq:HQC:bilinear-form_2}
\end{equation}
by noting that $D_r \delR_T(u^h)v^h = D_r v^h_\lin+\big(D_r \delR_T(u^h)v^h)-D_r v^h_\lin\big)$,
that
\[
\sum_{r\in\calR_\eps} \<V'_{\eps,r}(D_{\calR_\eps} R_T(u^h)),~ (D_r \delR_T(u^h)v^h-D_r v^h_\lin)\>_{x\in S_T^\rep}
=0,
\]
in view of \eqref{eq:HQC:microproblem},
and that $D_r v^h_\lin = \nabla_r v^h$ on each $T$.
Here we used the fact that $\delR_T(u^h)v^h-v^h_\lin \in \calU_\#(S_T^\rep)$ which follows from taking the functional derivative of $R_T(u^h)-u^h_\lin \in \calU_\#(S_T^\rep)$.
\end{remark}

\subsubsection{Reconstruction}\label{HQC:nonlinear:reconstruction}
The functions $R_T(u^h)$ describe the microstructure of the solution inside each $S_T^\rep$.
One can reconstruct the solution describing the microstructure, $u^{h,\c}$, from the homogenized solution $u^h$ by combining $R_T(u^h)$ into a single function defined on the entire atomistic lattice $\calM$:
\begin{equation}
\label{eq:HQC:periodic-extension}
u^{h,\c}(x) = R_T(u^h)(x)
\quad (x\in T\cap\calM)
.
\end{equation}
That is, we effectively extend $R_T(u^h)$ periodically on each $T$.
It should be noted that \eqref{eq:HQC:periodic-extension} does not uniquely determine $u^{h,\c}(x)$ if $x\in\partial T$ for some $T\in\calTh$.

\subsubsection{Variational Problem}
We define the homogenized quasicontinuum approximation as the solution $u^h\in \calU_{\#}^h$ of
\begin{equation}
\<\delE^\hqc(u^h), v^h\>_\Omega
=
F^\hqc(v^h)
\quad\forall v^h\in \calU_{\#}^h
\label{eq:HQC:problem}
\end{equation}
where
\begin{equation}
F^\hqc(v^h) =
\sum_{T\in\calTh} |T| \<f_\eps, v^h\>_{x\in S_T^\rep}
.
\label{eq:HQC:RHS}
\end{equation}
If the external force is smooth, it could instead be evaluated for a single representative atom.

In the case of linear nearest-neighbor 1D interaction it can be shown that (5.7) is well-posed and that the homogenized quasicontinuum solution $u^h$ approximates the solution $u$ of the original equations only in the $L^2$-norm. To get a good approximation in the $H^1$-norm, the reconstructed solution $u^{h,\c}$ should instead be considered. (This is analogous to the case of continuum homogenization, see discussion in Section \ref{sec:problem_formulation:simplified}.) We will report the analysis for the nonlinear case in a separate paper (see the preprint \cite[Theorems 4 and 5]{AbdulleLinShapeev2010} for the analysis of a linear model).

\subsection{HQC Algorithm}\label{sec:HQC:algorithm}

The problem \eqref{eq:HQC:problem} is nonlinear, and its practical implementation is usually done by Newton's method.
We briefly sketch below an algorithm for solving \eqref{eq:HQC:problem}.

For Newton's method we need to compute the second derivative of the energy \eqref{eq:Ehqc_def}:
\begin{equation} \label{eq:HQC:second-variation}
\<\ddelE^\hqc(u^h) w^h, v^h\>_\Omega
=
\sum_{T\in\calTh} |T|
\bigg\<\sum_{r,\rho\in\calR_\eps} V''_{\eps,r,\rho}(D_{\calR_\eps} R_T(u^h)) D_\rho \delR_T(u^h) w^h
	,~ D_r \delR_T(u^h) v^h
\bigg\>_{x\in S_T^\rep}
.
\end{equation}

\subsubsection{Newton's Iterations for the Macroproblem}

The algorithm based on Newton's method consists of choosing an initial guess $u^{h,(0)}\in \calU^h_{\#}$ and performing iterations
\begin{equation}
	\big\<\ddelE^\hqc\big(u^{h,(n)}\big) \big(u^{h,(n+1)} - u^{h,(n)}\big), v^h\big\>_\Omega
	=
	\big\<\delE^\hqc\big(u^{h,(n)}\big), v^h\big\>_\Omega
	+
	F^\hqc(v^h)
	\quad \forall v^h\in \calU_{\#}^h,
\label{eq:HQC:Newton-macro-iterations}
\end{equation}
with $n=0,1,\ldots$, until $u^{h,(n+1)}$ becomes close to $u^{h,(n)}$ in a chosen norm.

To solve the linear system \eqref{eq:HQC:Newton-macro-iterations} for $u^{h,(n+1)} - u^{h,(n)}\in \calU_{\#}^h$, we choose a nodal basis $w_k^h$ ($1\le k\le K$) of $\calU_\per^h$.
One way to satisfy the condition $\<u^h\>_\Omega=0$ would be to perform all the computations with one basis function eliminated (e.g., to consider $w_k^h$ for $2\le k\le K$), and post-process the final solution as $u^h - \<u^h\>_\Omega$.

The stiffness matrix of the system \eqref{eq:HQC:Newton-macro-iterations} will thus be
\[
A_{lm}
=
\big\<\ddelE^\hqc\big(u^{h,(n)}\big) w_l^h, w_m^h\big\>_\Omega
\]
and the load vector will be
\[
b_m
=
\big\<\delE^\hqc\big(u^{h,(n)}\big), w_m^h\big\>_\Omega
+
F^\hqc(w_m^h).
\]
As given by the formula \eqref{eq:HQC:second-variation} we need to compute the solution of microproblem $R_T\big(u^{h,(n)}\big)$ on each sampling domain $S_T^\rep$ as well as its derivative $\delR_T\big(u^{h,(n)}\big)w^h_l$.

\subsubsection{Solution of the Microproblem}

The microproblem \eqref{eq:HQC:microproblem} can also be solved with Newton's method.
For that, in each $T$ one needs to choose an initial guess $R_T^{(0)}$ to $R_T(u^{h,(n)})$, for instance $R_T^{(0)}(x) := u^{h,(n)}(x)$, and solve
\begin{align*}
\Big\<
	\sum_{r\in\calR_\eps}V'_{\eps,r}\big(D_{\calR_\eps} R_T^{(\nu)}\big)
	+
	\sum_{r,\rho\in\calR_\eps}V''_{\eps,r,\rho}\big(D_{\calR_\eps} R_T^{(\nu)}\big)
	D_\rho \big(R_T^{(\nu+1)}-R_T^{(\nu)}\big)
	,~ D_rs\Big\>_{x\in S_T^\rep}
=0
\\
\forall s\in \calU_\#(S_T^\rep)
,
\end{align*}
with respect to $R_T^{(\nu+1)}$ ($\nu=0,1,\ldots$) constrained by $R_T^{(\nu+1)} - u^{h,(n)}_\lin\in \calU_\#(S_T^\rep)$, until the difference between $R_T^{(\nu+1)}$ and $R_T^{(\nu)}$ is small in a chosen norm.

After that, we can compute $\delR_T w^h_l = \delR_T\big(u^{h,(n)}\big)w^h_l$ by solving
\begin{equation}
\Big\<
	\sum_{r,\rho\in\calR_\eps}V''_{\eps,r,\rho}\big(D_{\calR_\eps} R_T^{(\nu)}\big)
	D_\rho (\delR_T w^h_l)
	,~ D_rs\Big\>_{x\in S_T^\rep}
=0
\quad \forall s\in \calU_\#(S_T^\rep)
\label{eq:HQC:equation-for-reconstruction-variation}
\end{equation}
constrained by $\delR_T w^h_l - (w^h_l)_\lin\in \calU_\#(S_T^\rep)$.
Notice that the gradients of all but $d+1$ basis functions $D_r (w^h_l)_\lin$ inside $T$ are zero, which implies that we essentially need to solve the problem \eqref{eq:HQC:equation-for-reconstruction-variation} $d+1$ times.

Also observe that when computing $\delR_T\big(u^{h,(n)}\big)w^h_l$, we need to invert the same linear operator as in the final Newton iteration, which allows for some additional optimization.

\subsubsection{Possible Modifications of the Algorithm}\label{sec:HQC:algorithm:modifications}

First, notice that when solving for $u^{h,(n+1)}$ we could linearize the problem on the previous iteration $u^{h,(n)}$.
In that case we would have linear cell problems and thus we would need only outer Newton iteration, but it would be required to keep the values of the micro-solution $R_T(u^{h,(n)})$ from the previous iteration.
We notice, however, that for a practical implementation of the above algorithm it may also be required to keep the values of the micro-solution: one needs these values to initialize the inner Newton iterations; depending on the initial guess for the microproblem the iterations may converge to a wrong microstructure.

Another modification could be to compute the contribution of the external force $f_\eps$ in \eqref{eq:HQC:RHS} for a single atom in the case of no oscillations in $f_\eps$.

In the case of linear interaction, the algorithm becomes simpler: one does not need to do Newton iterations.
Nevertheless, the algorithm in Section \ref{sec:HQC:algorithm} is applicable to the linear problem where it converges in just one iteration.

\section{Equivalence of Numerical Methods for Multilattices} \label{sec:equivalence}

In this section we show the equivalence of three different methods for computing equilibrium of multilattice crystals, namely (1) the proposed HQC method, (2) finite element discretization of continuum homogenization, and (3) MQC. We only compare the interaction energy of the method, since the external forces for these methods can always be chosen same.

Below we specify the three methods that we compare.
It should be noted that given the macroscopic displacement $u^h$ we cannot guarantee uniqueness of the energy as there may be several solutions to the micro-problems corresponding to different phases of a multilattice crystal.
To rigorously address such non-uniqueness, we allow for all possible combinations of microfunctions in each element $T\in\calT_h$, and compare the set of the resulting energies on a fixed $u^h\in\calU^h_\per$ for the three methods.

In the following definitions we adopt the convention that for two sets, $A$ and $B$, and a number, $\gamma$, $A+B := \{a+b : a\in A, b\in B\}$ and $\gamma A := \{\gamma a:a\in A\}$.

\begin{description}
\item[Method 1. (HQC) ] For $u^h\in\calU^h_\per$ we define the energy of the HQC method as a set $E^\hqc(u^h)\subset\bbR$,
\begin{equation}\label{eq:equivalence:Ehqc}
E^\hqc(u^h) := \sum_{T\in\calTh} |T| \, e^\hqc_T(u^h),
\end{equation}
where $e^\hqc_T(u^h)\subset\bbR$ is defined as
\[
e^\hqc_T(u^h) := \big\{
	\big\<V_\eps(D_{\calR_\eps} R_T(u^h))\big\>_{x\in S_T^\rep}
	: R_T(u^h) \text{ is a solution to \eqref{eq:HQC:microproblem}}
\big\}
.
\]

\item[Method 2. (FEM for homogenized equations) ]
The energy of FEM discretization of the homogenized energy is $E^0(u^h)$, defined by
\begin{equation}\label{eq:equivalence:Ezero}
E^0(u^h) := \sum_{T\in\calTh} |T| \, \Phi^0(\nabla u^h|_T),
\end{equation}
where $\Phi^0$ is defined as a set
\[
\Phi^0(\mF) :=
\big\{
	\big\<
	V\big(\mF \calR + D_{y,\calR}\chi\big)
	\big\>_{y\in\calP}
	: \chi \text{ is a solution to \eqref{eq:chi_def}}
\big\}
.
\]

\item[Method 3. (Multilattice QC)]
We define
\begin{equation}\label{eq:equivalence:Emqc}
E^\mqc(u^h) := \sum_{T\in\calTh} |T| \, e^\mqc_T(u^h),
\end{equation}
where
\begin{align*}
e^\mqc_T(u^h) := \Big\{
	&
	\frac1m \sum_{\beta=0}^{m-1}
	V_{\eps,\beta}\Big(
		\big(\nabla u^h|_T\big)\calR_{\eps,\beta} + \sum_{\alpha=1}^{m-1} \big(q^h_\alpha|_T\big) D_{\calR_{\eps,\beta}} w_\alpha(\eps p_\beta)
\Big)
	\\&
	: \bq^h = \bq(\nabla u^h) \text{ is a solution to \eqref{eq:QC_general_shift-vectors-equation}}
\Big\}.
\end{align*}
\end{description}

\begin{theorem}\label{thm:methods-are-equivalent}
Let $\calT_h$ be a triangulation of $\Omega$ and $\calU^h_\per$ be the associate function space defined by \eqref{eq:UH-space}.
Then for any $u^h\in\calU^h_\per$, there holds
\[
E^\hqc(u^h) = E^0(u^h) = E^\mqc(u^h),
\]
where $E^\hqc(u^h)$, $E^0(u^h)$, $E^\mqc(u^h)$ are defined by, respectively, \eqref{eq:equivalence:Ehqc}, \eqref{eq:equivalence:Ezero}, \eqref{eq:equivalence:Emqc}.
\end{theorem}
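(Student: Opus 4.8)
The plan is to reduce the equality of the three energy \emph{sets} to an element-by-element identity. Since each energy is a Minkowski sum over the mesh, $E^\hqc(u^h)=\sum_{T\in\calTh}|T|\,e^\hqc_T(u^h)$ and likewise for the other two (using the set conventions $\gamma A=\{\gamma a\}$, $A+B=\{a+b\}$ adopted just before the theorem), and since the micro/cell problems posed on distinct elements $T$ are completely decoupled, it suffices to prove, for each fixed $T$, that
\[
e^\hqc_T(u^h)=\Phi^0(\nabla u^h|_T)=e^\mqc_T(u^h).
\]
I would establish each of the two equalities by exhibiting an explicit bijection between the solution sets of the respective micro-problems under which the attached energy values coincide.

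For the first equality I would rescale the HQC micro-problem onto the reference cell. Writing $R_T(u^h)=u^h_\lin+w$ with $w\in\calU_\#(S_T^\rep)$ and using that $u^h_\lin$ is affine with $\nabla u^h_\lin=\nabla u^h|_T=:\mF$, one has $D_r u^h_\lin=\mF r$, hence $D_r R_T(u^h)=\mF r+D_r w$. Introducing the fast variable $y=(x-x_T^\rep)/\eps\in\calP$ and setting $w(x)=\eps\,\chi(y)$, a short computation gives $D_r w(x)=D_{y,r}\chi(y)$ and identifies $\calU_\#(S_T^\rep)=\calU_\#(\eps\calP)$ with $\calU_\#(\calP)$. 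Under the $\eps$-periodicity assumption $V_\eps(\bullet;x+\eps p_\alpha)=V_{\eps,\alpha}=V(\bullet;y)$ and $\calR_\eps=\calR(y)$, the HQC micro-problem \eqref{eq:HQC:microproblem} becomes verbatim the cell problem \eqref{eq:chi_def}, and the averaged energy $\langle V_\eps(D_{\calR_\eps}R_T(u^h))\rangle_{S_T^\rep}$ becomes $\langle V(\mF\calR+D_{y,\calR}\chi)\rangle_\calP=\Phi^0(\mF)$ as in \eqref{eq:Phi0-def}. This map $w\leftrightarrow\chi$ is a bijection between the two solution sets preserving the energy value, whence $e^\hqc_T(u^h)=\Phi^0(\nabla u^h|_T)$.

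For the second equality I would exploit that $\calP=\{p_0,\dots,p_{m-1}\}$ is a finite set, so a cell function $\chi\in\calU_\#(\calP)$ is nothing but its $m$ values $\chi(p_\alpha)$, which I identify with the MQC shift vectors via $q_\alpha:=\chi(p_\alpha)$. Using $\bbZ^d$-periodicity of $\chi$, for $y=p_\beta$ and $r\in\calR_\beta$ one gets $D_{y,r}\chi(p_\beta)=\chi(p_{a_{\beta,r}})-\chi(p_\beta)=q_{a_{\beta,r}}-q_\beta$, exactly the shift combination in the simplified MQC energy \eqref{eq:Emqc-simplified}; moreover $\langle\bullet\rangle_{y\in\calP}=\tfrac1m\sum_{\beta=0}^{m-1}$. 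Consequently the integrand of $\Phi^0(\mF)$ coincides with the summand of $e^\mqc_T(u^h)$, and choosing the test functions $\sigma\in\calU_\#(\calP)$ to be the zero-average combinations of the indicators $w_\gamma$ turns the cell problem \eqref{eq:chi_def} into the shift-vector system \eqref{eq:QC_general_shift-vectors-equation}. This again yields a bijection of solution sets with equal energies, giving $\Phi^0(\nabla u^h|_T)=e^\mqc_T(u^h)$.

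The main obstacle, and the reason the statement is phrased in terms of \emph{sets}, is the potential non-uniqueness of the micro-solutions; I must therefore verify that the three correspondences are genuine bijections on the solution sets rather than mere formal coincidences of the equations, and handle the differing normalization conventions ($\calU_\#$ enforces zero average, whereas MQC sets $q_0=0$). This turns out to be harmless: in all three energies $\chi$ (equivalently $\bq$) enters only through the discrete differences $D_{y,r}\chi$, which are invariant under adding a constant, so the energy attached to each solution is independent of the normalization and the three energy sets coincide. The remaining care is purely bookkeeping---matching the averaging operators, the scaling $w=\eps\chi$, and the role of the representative position $x_T^\rep$, whose choice does not affect the energy by the assumed $\eps$-periodicity of $V_\eps$ and $\calR_\eps$.
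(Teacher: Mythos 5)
Your proposal is correct and follows essentially the same route as the paper's proof: a per-element reduction, the rescaling bijection $R_T(u^h)=u^h_\lin+\eps\,\chi(\nabla u^h|_T;\cdot)$ between HQC micro-solutions and cell-problem solutions, the identification of the MQC shift vectors with differences of the values of the periodic micro-function (with the indicator functions $w_\gamma$, made zero-average, as test functions), and the treatment of non-uniqueness and of the $q_0=0$ versus zero-average normalizations via invariance of the equations and energies under additive constants. The only organizational difference is that you chain MQC to the homogenized energy $\Phi^0$ (i.e., to $\chi$) while the paper chains it to HQC (i.e., to $R_T$, via $q^h_\alpha|_T=U(\eps p_\alpha)-U(0)$ with $U=R_T(u^h)-u^h_\lin$), which is the same correspondence after rescaling.
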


\begin{proof}
{\it Part 1, $E^\hqc(u^h) = E^0(u^h)$.}
First, we show that the micro-functions of Methods 1 and 2, $R_T$ and $\chi$, are related through
\begin{equation}\label{eq:methods-are-equivalent:one_two_microfunctions}
\big(R_T(u^h)\big)(x) = u^h_\lin(x) + \eps \chi\big(\nabla u^h|_T;\smfrac x\eps\big).
\end{equation}
Indeed, denote $\mF = \nabla u^h|_T$ and compute $D_r R_T(u^h)$:
\begin{equation}
\label{eq:equiv_thm_DrRk}
D_r R_T(u^h)
= D_r u^h_\lin + \eps D_r \chi\big(\mF;\smfrac x\eps\big)
= \mF r + D_{y,r} \chi\big(\mF;\smfrac x\eps\big)
.
\end{equation}
The following calculation shows that the left-hand sides of \eqref{eq:HQC:microproblem} and \eqref{eq:chi_def} coincide up to a factor $\eps^{-1}$:
\begin{align*}
\Big\<\sum_{r\in\calR} V'_{\eps,r}(D_{\calR_\eps(x)} R_T(u^h); x), D_r s(x)\Big\>_{x\in S_T^\rep}
=~&
\Big\<\sum_{r\in\calR} V'_r(D_{\calR(y)} R_T(u^h); y), \eps^{-1} D_{y,r} s(\eps y)\Big\>_{y\in \calP}
\\ =~&
\eps^{-1} \Big\<\sum_{r\in\calR} V'_r(\mF \calR + D_{y,\calR}\, \chi(\mF;y); y), D_{y,r} \sigma(y)\Big\>_{y\in \calP}
,
\end{align*}
where we do the change of the independent variable $y=\smfrac x\eps$, and of the test function $\sigma(y) = s(\eps y)$.
Hence \eqref{eq:methods-are-equivalent:one_two_microfunctions} indeed relates the set of solutions of \eqref{eq:HQC:microproblem} and \eqref{eq:chi_def} with $\mF = \nabla u^h|_T$.

The following straightforward calculation concludes the proof of $E^\hqc(u^h) = E^0(u^h)$:
\begin{align*}
e^\hqc_T(u^h)
=~&
\big\<V_\eps \big( D_{\calR_\eps} R_T(u^h)\big)\big\>_{x\in S_T^\rep}
\\ =~&
\big\<V_\eps \big( (\nabla u^h|_T)\calR_\eps + D_{y,\calR_\eps} \chi\big(\nabla u^h|_T;\smfrac x\eps\big)\big)\big\>_{x\in S_T^\rep}
\\ =~&
\big\<V \big( (\nabla u^h|_T)\calR + D_{y,\calR} \chi(\nabla u^h|_T; y)\big)\big\>_{y\in \calP}
=
\Phi^0(\nabla u^h|_T)
\end{align*}
where we used \eqref{eq:equiv_thm_DrRk} in the first step of this calculation.

{\it Part 2, $E^\hqc(u^h) = E^\mqc(u^h)$.}
The main component of the proof consists of fixing $T\in\calTh$ and showing that $q^h_\alpha|_T$ and $R_T(u^h)$ are related through
\begin{equation}\label{eq:methods-are-equivalent:one_three_microfunctions}
q^h_\alpha|_T = U(\eps p_{\alpha}) - U(0)
,
\qquad \alpha=0,\ldots,m-1
,
\end{equation}
where $U := R_T(u^h)-u^h_\lin \in \calU_\#(S_T^\rep)$.

First, assume that $R_T(u^h)$ is a solution to \eqref{eq:HQC:microproblem}.
Notice that due to $\eps\calP$-periodicity of $U$, we can write
\[
U(x) = \sum_{\alpha=0}^{m-1} U(\eps p_\alpha)\, w_\alpha(x),
\]
subtracting the constant $U(0)$ and applying $D_r$ yields
\begin{align*}
D_r U(x)
=~&
D_r \Big(-U(0)+\sum_{\alpha=0}^{m-1} U(\eps p_\alpha)\, w_\alpha(x)\Big)
\\ =~&
D_r \Big(\sum_{\alpha=1}^{m-1} U(\eps p_\alpha)\, w_\alpha(x)\Big)
\\ =~&
D_r\,\sum_{\alpha=1}^{m-1} (\tilde{q}^h_\alpha|_T) w_\alpha(x)
,
\end{align*}
where we used the identity $\sum_{\alpha=0}^{m-1} w_\alpha(x)=1$ for all $x\in\calM$.

We then substitute $q^h_\alpha|_T = U(\eps p_{\alpha}) - U(0)$ into \eqref{eq:QC_general_shift-vectors-equation}.
The argument of $V_\eps$ in \eqref{eq:QC_general_shift-vectors-equation} can be written as
\begin{equation}
\label{eq:Dq_eq_DR}
\begin{split}
		\big(\nabla u^h|_T\big)\calR_{\eps,\beta} + \sum_{\alpha=1}^{m-1} \big(q^h_\alpha|_T\big) D_{\calR_{\eps,\beta}} w_\alpha(\eps p_\beta)
=~&
D_{\calR_{\eps,\beta}} \Big(u^h_\lin + \sum_{\alpha=1}^{m-1} (q^h_\alpha|_T) w_\alpha(\eps p_\beta)\Big)
\\=~&
D_{\calR_{\eps,\beta}} \Big(u^h_\lin + U(\eps p_\beta)\Big)
=
D_{\calR_\eps(x)} R_T(u^h)(x)\big|_{x=\eps p_\beta}
\end{split}
\end{equation}
and therefore, upon noticing that summations over $x=\eps p_\beta$ and over $x\in S_T^\rep$ coincide for the $\eps\calP$-periodic functions, we conclude that the left-hand sides of \eqref{eq:QC_general_shift-vectors-equation} and \eqref{eq:HQC:microproblem} coincide when $s(x)$ is chosen as $s(x) = w_\gamma(x)-\<w_\gamma(x)\>_{x\in\eps\calP}$, $\gamma=1\ldots,m-1$ (then $D_r s = D_r w_\gamma$).
This proves that $q^h_\alpha|_T = U(\eps p_{\alpha}) - U(0)$ satisfies \eqref{eq:QC_general_shift-vectors-equation}.

To show the converse, assume that $\bq^h$ is a solution to \eqref{eq:QC_general_shift-vectors-equation} and let $R_T$ be defined through \eqref{eq:methods-are-equivalent:one_three_microfunctions}.
We then notice that, due to calculation \eqref{eq:Dq_eq_DR}, \eqref{eq:HQC:microproblem} holds with the function $s(x) = w_\gamma(x)$, $\gamma=1\ldots,m-1$ and, obviously, with the function $s(x)=1$. These functions form a basis of $\calU_\per(\calP)=\calU_\per(S_T^\rep)$, therefore \eqref{eq:HQC:microproblem} holds with any $s\in\calU_\#(S_T^\rep)\subset\calU_\per(S_T^\rep)$, that is, $R_T$ is a solution to \eqref{eq:HQC:microproblem}.
This concludes the proof that the set of solutions of \eqref{eq:QC_general_shift-vectors-equation} and \eqref{eq:HQC:microproblem} are related through \eqref{eq:methods-are-equivalent:one_three_microfunctions}.

The stated identity $E^\mqc(u^h)=E^\hqc(u^h)$ follows from $e^\mqc_T(u^h)=e^\hqc_T(u^h)$ which follows directly from \eqref{eq:Dq_eq_DR}.
\end{proof}

\begin{remark}\label{rem:QC_for_discr_hom}
One can consider yet another approach to coarse-graining multilattices, namely consider the discretely homogenized equation \eqref{eq:discr_homogenized} and apply the standard QC method (see Section \ref{sec:QC:homogeneous}) to it.
As a result one will obtain energy of $\<\Phi^0(\nabla u^h)\>_\Omega$ which obviously coincides with the energy of FEM applied to the continuously homogenized equations.
\end{remark}

\begin{figure}
\begin{center}
	\includegraphics{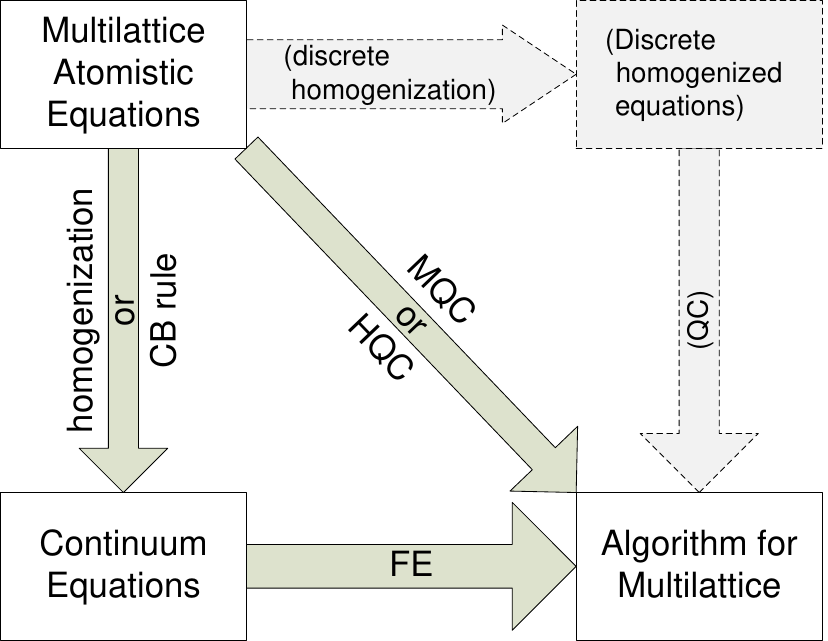}
\end{center}
\caption{Equivalence of different methods. In Theorem \ref{thm:methods-are-equivalent} we prove equivalence of the proposed method (HQC), the multilattice quasicontinuum method \cite{TadmorSmithBernsteinEtAl1999} (MQC), and FEM applied to the homogenized equations.
Also, in Remark \ref{rem:QC_for_discr_hom} we mention that they are equivalent to quasicontinuum method (QC) applied to the discretely homogenized equations.
}
\label{fig:methods-are-equivalent}
\end{figure}

As a corollary of Theorem \ref{thm:methods-are-equivalent} and Remark \ref{rem:QC_for_discr_hom}, the solutions corresponding to the different methods considered, being critical points of the energy, also coincide (of course, provided that the external force is treated in the same way for these methods).
Theorem \ref{thm:methods-are-equivalent} and Remark \ref{rem:QC_for_discr_hom} are graphically summarized in Figure \ref{fig:methods-are-equivalent}.

\section{Application of HQC to a Multilattice}\label{sec:multilattice}

In this section we briefly report the results of application of HQC to the multilattice \cite[Section 8]{AbdulleLinShapeev2010}.
Note that due to Theorem \ref{thm:methods-are-equivalent}, application of MQC to the multilattice gives the same results.

We apply HQC to the 1D linear model problem, same as the one in Section \ref{sec:problem_formulation:simplified} but with a larger interaction range $\calR$.
We compute the HQC solution, $u^h$ and the reconstructed (corrected) solution $u^{h,\c}$, and compare it to the exact solution $u$.

\begin{figure}
\begin{center}
	\includegraphics{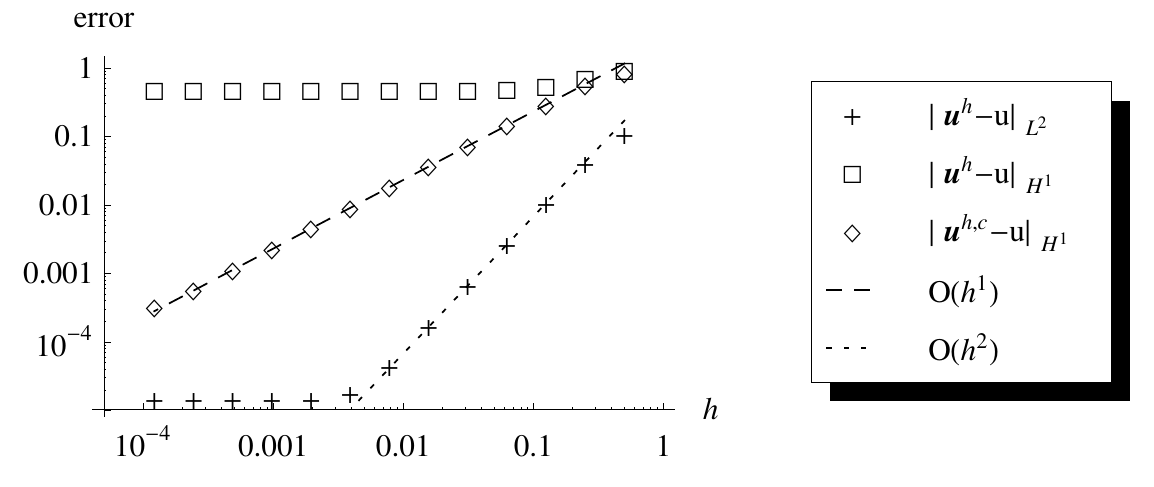}
\end{center}
\caption{Results of application of HQC to a multilattice.
A 1D linear nearest-neighbor interaction model was used.
We observe a first-order convergence of $\|u^{h,\c}-u\|_{H^1(\calM)}$, no convergence of $\|u^h-u\|_{H^1(\calM)}$, and a second-order convergence of $\|u^h-u\|_{L^2(\calM)}$ which stagnates at some point as $h$ is refined.
}
\label{fig:results_multilattice_linear}
\end{figure}

We prove (for nearest-neighbor interaction) and observe in numerical experiments that $\|u^{h,\c}-u\|_{H^1(\calM)}$ converges with the first order in $h$, where $h=\max_{T\in\calT} {\rm diam(T)}$ and $\|\bullet\|_{H^1(\calM)}$ denotes the discrete $H^1$-norm on the lattice $\calM$.
Furthermore, we show that $\|u^h-u\|_{L^2(\calM)} \leq C_1 h^2 + C_2 \eps$, that is, the $L^2$-error converges with the second order up to some point where it stagnates at the level of $C_2\eps$ as $h$ is further refined.
The $H^1$-error of $u^h-u$, on the other hand, stays essentially constant as $h$ is refined.
The results of our numerical experiments are shown in Fig. \ref{fig:results_multilattice_linear}.

The results of application of HQC to a nonlinear interaction are qualitatively same as the presented results for the linear interaction.

\section{Application of HQC to Stochastic Materials}\label{sec:stochastic}

The HQC method can readily be generalized for non-crystalline materials such as glasses or complex metallic alloys.
For that, lacking the period of the microstructure $\calP$, one needs only to take $S_T^\rep$ large enough to accurately represent the material's microstructure.
In this section we present an example of such computation.

In addition to taking $S_T^\rep$ large enough, one could also average over an ensemble of samples of different microstructures for a given macroscopic displacement gradient $\nabla u^h|_T$ in each element $T$; however, we do not pursue this in the present work.
We refer to \cite{BlancLeLions2007_stochastic_lattices, GloriaOtto2011_variance_estimate} and references therein for theoretical studies of stochastic homogenization of lattice energies.

We take an atomistic system of $2048\times 2048$ atoms.
That is, we choose $\eps=\smfrac1{2048}$ and $\calM = \eps \bbZ^2 \cap [0,1)^2$.
The atomistic bonds are chosen to have quadratic interaction energy,
\[
E(u) = \Big\< \sum_{r\in\calR} \smfrac12 \psi_{\eps,r}(x) |D_r u|^2 \Big\>_{x\in\calM}
\]
with $\calR=\{(1,0),(0,1),(1,1),(-1,1)\}$, as illustrated in Fig.\ \ref{fig:2d_springs_short}.
The bonds' strengths $\psi_{\eps,r}$ are randomly generated with a uniform distribution between $0.5$ and $10$ for $r=(1,0)$ and $r=(0,1)$ (i.e., vertical and horizontal bonds) and between $0.1$ and $5$ for $r=(1,1)$ and $r=(-1,1)$ (i.e., diagonal bonds).
Such choice of $\psi_{\eps,r}$ leads to interaction energy $E(u)$ being a convex function of $u$.
Only a single realization of $\psi_{\eps,r}$ is used for this test.

The external force is chosen as
\[
f(x_1,x_2) = 10 e^{-\cos(\pi x_1)^2-\cos(\pi x_2)^2} \left(\begin{array}{c} \sin(2\pi x_1) \\ \sin(2\pi x_2) \end{array}\right)
-\bar{f},
\]
where $\bar{f}$ is determined so that the average of $f$ is zero.
The equilibrium configuration for a system with $32\times32$ atoms is illustrated in Fig.\ \ref{fig:2d-solution-rand-at-large}.
We stress that we no longer have the period of the microstructure $\calP$, and the associated representation of the energy \eqref{eq:E_alt} which was needed in formulation of the MQC method or applying the formal homogenization techniques.

\begin{figure}
\begin{center}
\hfill
\subfigure[]{\label{fig:2d_springs_short}\includegraphics{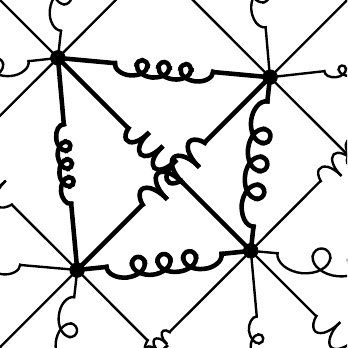}}
\hfill\hfill
\subfigure[]{\label{fig:2d-solution-rand-at-large}\includegraphics{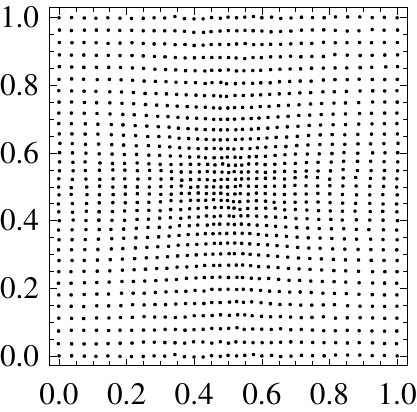}}
\hfill$\mathstrut$
\caption{Stochastic atomistic model: An illustration of the model (left) and an exact solution for $32\times32$ atoms (right).}
\label{fig:2d_springs_illustration}
\end{center}
\end{figure}

We apply the HQC algorithm to the described system.
We choose the sampling domain $S_T^\rep$ as a subsystem of $N_{\rm rep}\times N_{\rm rep}$ atoms.
We then compute the HQC solution and compare it to the exact solution of the problem.
A structured triangular uniform mesh with right-angled triangular elements with the leg size $h=\smfrac14,\smfrac18,\ldots$ is used.

For comparison, we also produce the results of calculation with an affine displacements for computing the effective elasticity tensor in each element $T$; i.e., when atoms are not allowed to relax to equilibrium when an external displacement gradient $\mF$ is applied.

The relative errors of the interaction energy of HQC and affine-displacement solutions ($E^\hqc$ and $E^{\rm ad}$, respectively) as compared to the energy of the exact solution $E$, are plotted in Fig.\ \ref{fig:2d-testcase2-error} for different mesh size $h$ and different sampling domain size $N_{\rm rep}$.
A second-order convergence of HQC and absence of convergence of the solution computed according to the affine deformation can be observed.
One can also see that with $N_{\rm rep}=128$ (and even with $N_{\rm rep}=16$) one can get a rather accurate numerical solution.

\begin{figure}
\begin{center}
\includegraphics{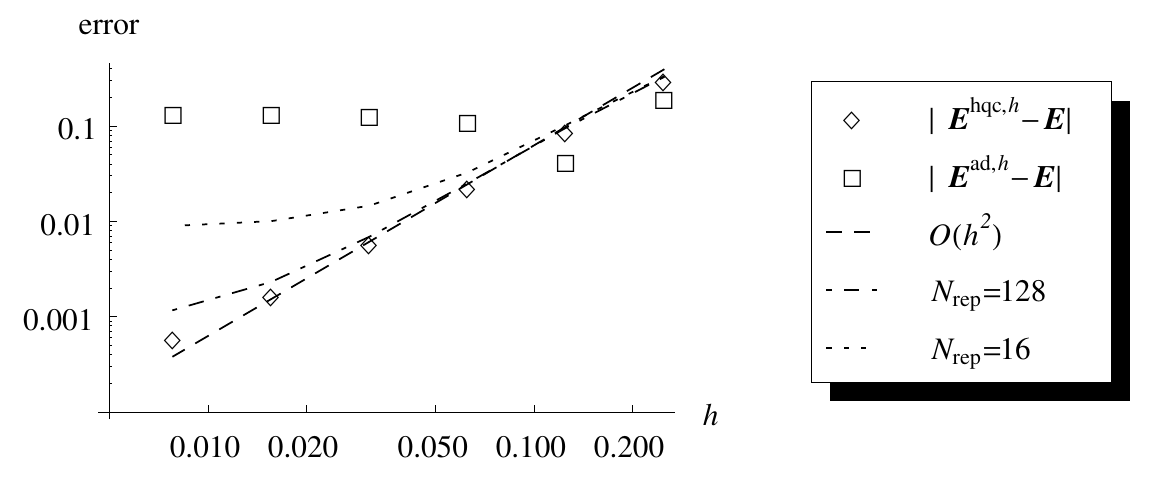}
\end{center}
\caption{Dependence of relative error of computing the energy with HQC and a straightforward application of the Cauchy--Born rule.
The squares and diamonds correspond to $N_{\rm rep}=2048$ (i.e., when the microproblem coincides with the entire system).
A second-order convergence of HQC is observed whereas the Cauchy--Born rule solution does not converge.
The dot-dashed and dotted curves are $|E^\hqc-E|$ for $N_{\rm rep}=128$ and $N_{\rm rep}=16$ respectively.
}
\label{fig:2d-testcase2-error}
\end{figure}

\section{Application of HQC to Time-dependent Problems}\label{sec:unsteady}

We apply the proposed HQC method to the 1D evolution of a multilattice, assumed to be slow (i.e., with no thermal oscillations) described by the following equations
\begin{subeqnarray}
	\< M^\eps \ddot u, v\>_\calM &=& \<\delE(u), v\>_\calM
	\quad \forall v\in \calU_{\per}(\calM)
\\
	u|_{t=0} &=& u^0
\\
	\dot{u}|_{t=0} &=& 0.
\label{eq:unsteady:variational_problem_generic}
\end{subeqnarray}
Here $u=u(t,x) \in C^2([0,T]; \calU_{\per}(\calM))$ is the time-dependent displacement of atom $x$, $u^0 = u^0(x) \in \calU_{\per}(\calM)$ is the initial displacement, $M^\eps(x) = M\big(\smfrac x\eps\big)$ is the mass of atom $x$, $\dot u = \frac{\dd}{\dd t} u$, $\ddot u = \frac{\dd^2}{\dd t^2} u$.
The energy $E(u)$ of a deformation of the multilattice $\calM$ is as defined in Section \ref{sec:problem_formulation:full}.
The masses $M=M(y)$, as well as the interaction, is a $\calP$-periodic function.
We assume no external forces.

One can, assuming no fast oscillations in time of the microstructure, perform the two-scale expansion procedure for the time-depend case (which closely follows the continuum case \cite{BensoussanLionsPapanicolaou1978})
\begin{equation}\label{eq:unsteady:homogenized}
	\< M^0 \ddot u, v\>_\calM =\<\delE^0(u), v\>_\calM
	\quad \forall v\in \calU_{\per}(\calM),
\end{equation}
where $E^0(u)$ is given by \eqref{eq:homogenization:generalizations:E} and $M^0 = \<M\>_\calP$, and likewise formulate the macro-to-micro discretization \cite{AbdulleGrote2011, EngquistHolstRunborg2011}
\[
	\< M^0 \ddot u^h, v^h\>_\calM =\<\delE^\hqc(u^h), v^h\>_\calM
	\quad \forall v\in \calU_{\per}^h.
\]

For the numerical test we take the same lattices as for the simplified model with $m=2$ (see Section \ref{sec:problem_formulation:simplified}).
The atoms interact with the Lennard--Jones potential \eqref{eq:problem_formulation:LJ} with
\[
s_{x,x+\eps r} = \begin{cases}
0.4 & \frac x\eps \text{ is half-integer} \\
1.6 & \frac x\eps \text{ is integer},
\end{cases}
\qquad
\ell_{x,x+\eps r} = \begin{cases}
1.01 & \frac x\eps \text{ is half-integer} \\
0.99 & \frac x\eps \text{ is integer},
\end{cases}
\]
and the cut-off distance $R=3$.
The masses of atoms are
\[
M^\eps(x) = \begin{cases}
1 & \frac x\eps \text{ is half-integer} \\
2 & \frac x\eps \text{ is integer}.
\end{cases}
\]
The atomistic system contains $\#(\calM)=2^{14}$ atoms.

The initial displacement has to conform with the assumption of absence of fast vibrations of the microstructure.
It is chosen in the following way:
First, we compute an equilibrium displacement $u$; i.e., such that $\<\delE(u), v\>_\calM=0$ for all $v\in\calU_{\per}(\calM)$.
Second, we compute an eigenvector of $\ddelE(u)$, $u_1$, corresponding to the mode oscillating most slowly.
Then, the initial displacement is taken to be $u^0 = u + 0.01\,\frac{u_1}{\|D u_1\|_{L^{\infty}}}$.
With such an initial displacement, the solution remains smooth (i.e., most of energy of the solution is contained in long wavelength modes) for times comparable to the largest oscillation period, and one can compare a QC approximation of the solution with the exact solution.
Beyond this critical time, the shock waves appear, which cause fast vibrations of the microstructure past them and hence make the approximation \eqref{eq:unsteady:homogenized} invalid.

We compare the reconstructed solution obtained by the HQC discretization in space with the reference solution obtained in the full atomistic computation.
The reconstruction of the HQC solution is performed similarly as described in Section \ref{HQC:nonlinear:reconstruction}.
The sampling domains $S_T^\rep$ were chosen to be $\eps\calP$ up to a shift in $\eps\bbZ$.
The HQC discretization is performed on a sequence of meshes with $h=\smfrac14,\smfrac18,\ldots$.
For the time integration, we use the Verlet method with the timestep $\tau = \frac{1}{20} h$ for the HQC solution and $\tau=\frac{1}{20} \eps$ for the reference atomistic solution.
We run the computation until $T=\frac{1}{20}$, which corresponds to about a quarter of a period of oscillation of the solution.

\begin{figure}
\begin{center}
$\mathstrut$
\hfill
\includegraphics{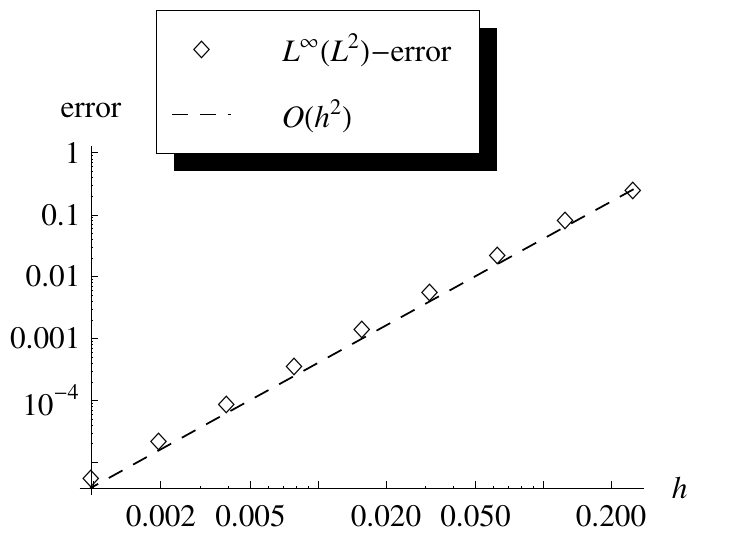}
\hfill
\includegraphics{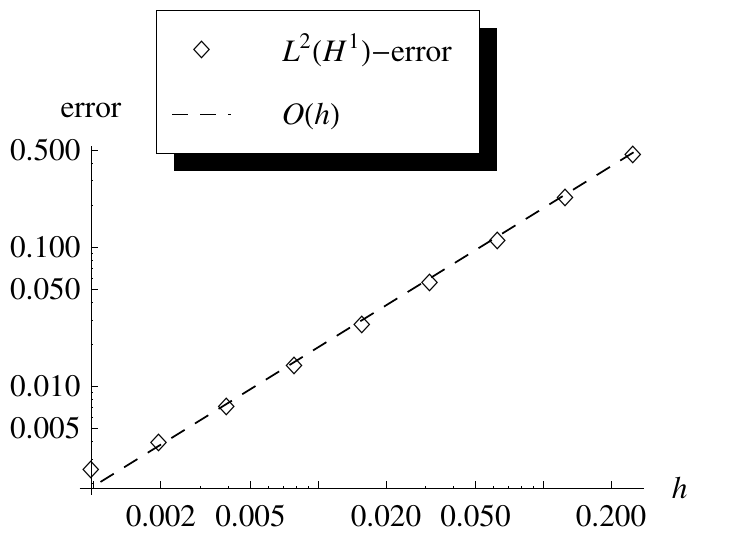}
\hfill
$\mathstrut$
\end{center}
\caption{Error of time-dependent solution in (the discrete analogues of) the $L^\infty([0,T]; L^2(\Omega))$-norm (left) and $L^2([0,T]; H^1(\Omega))$-norm (right).
}
\label{fig:dynamic_error}
\end{figure}

The errors in (the discrete analogues of) $L^\infty([0,T]; L^2(\Omega))$-norm and $L^2([0,T]; H^1(\Omega))$-norm are presented in Fig.~\ref{fig:dynamic_error}.
One can clearly observe for relatively large $h$ a second order convergence in the $L^2(\Omega)$-norm and a first order convergence in the $H^1(\Omega)$-norm, and the convergence seems to stagnate as $h$ is further reduced.

\section{Summary and Concluding Remarks}\label{sec:conclusion}

We have considered the problem of equilibrium of multilattice crystalline materials and discussed the application of the (local) QC method \cite{TadmorSmithBernsteinEtAl1999} for such materials.
We then have proposed a homogenization framework and, based on it, proposed a numerical macro-to-micro method which we called HQC.
We have shown that the three methods, namely the HQC method, the QC method applied to the discretely homogenized equations, and the multilattice QC, are equivalent.

Despite equivalence of the methods for statics of multilattice, we argue that the homogenization framework developed in this paper has several advantages.
First, it contributes to a better understanding of the multilattice QC method and provides a link to the existing theory of homogenization of PDEs.
In particular, we have generalized and applied the HQC method to the case of random materials and to the unsteady case, numerically demonstrating convergence of the proposed numerical method.
Second, the developed homogenization framework allows for application of analytical techniques available in the homogenization theory and thus seems most promising for convergence analysis of numerical methods for multilattices.
We refer to our preprint \cite{AbdulleLinShapeev2010} and an ongoing work \cite{AbdulleLinShapeev2011_analysis} for an example of such analysis.
We also note that the extension of the homogenization technique proposed in this paper to atomistic materials at finite temperature is of high interest.

\section*{Acknowledgments}

We thank the three anonymous referees for many comments that led to significant improvement of this paper.

\appendix

\section{Notations}\label{sec:notations}

In this appendix we gather the frequently used notations.

\subsection{Function spaces}\label{sec:notations:spaces}

For any finite set $S\subset \bbR^d$, we define the discrete averaging (integration) operator $\<\bullet\>_S$ by
\[
\<u\>_S := \frac{1}{\#(S)}\sum_{x\in S} u(x),
\]
and sometimes, more verbosely, as $\<u(x)\>_{x\in S}$.
Here $\#(S)$ is the number of elements in the set $S$.

We consider discrete periodic functions (e.g., displacements or external forces) with the periodic cell $\Omega = [0,1)^d$ ($d\in\bbN$), and the lattice (being, actually, the discrete periodic cell) $S \subset \Omega$ ($S=\calL, \calM$) containing a finite number of points: $\#(S)<\infty$.
The periodic extension of the lattice is denoted by $S_\per = S+\bbZ^d$.
Such space of periodic functions is denoted by
\begin{equation}\label{eq:Uper}
\calU_\per(S)= \big\{u: S_\per\to\bbR: \ u(x+a) = u(x)~\forall x\in S,~\forall a\in\bbZ^d \big\},
\end{equation}
and the space of periodic functions with zero average by
\[
\calU_{\#}(S)= \big\{u\in \calU_\per(S):~\<u\>_S=0\big\}.
\]
We do not have separate notations for scalar and vector-valued functions and explicitly state whether the function is scalar or vector-valued when it may cause ambiguity.

Similarly to the discrete averaging, we also use continuum averaging notation $\<u\>_\Omega := \int_\Omega u(x) \dx$, and for functions of two variables we write $\<v\>_{S_1\times S_2} := \big\<\<v\>_{S_2}\big\>_{S_1}$, where each $S_i$ ($i=1,2$) can be either continuous or discrete.

For vector-valued $u=u(x)$ and $v=v(x)$ we denote the pointwise scalar product by $u\cdot v$ (i.e., $(u\cdot v)(x) = u(x)\cdot v(x)$)
and the semi-inner product in $\calU_\per(\calL)$ by
\[
\<u,v\>_\calL = \<u\cdot v\>_\calL = \frac{1}{\#(\calL)}\sum_{x\in\calL} u(x)\cdot v(x).
\]
(It is a proper inner product only in $\calU_\#(\calL)$.)
We similarly define the pointwise scalar product and the (semi-)inner product for functions of continuum variable and for functions of several continuum or discrete variables.

\subsection{Operators}\label{sec:notations:operators}

For $u\,:\,S\to \bbR^d$ ($S=\calL, \calM$) we introduce the finite difference $D_{x,r} u$
\[
D_{x,r} u(x) := \frac{u(x + \eps r)-u(x)}{\eps}
\qquad(\text{for }x\in S,~r\in\bbR^d \text{~~such that }x+\eps r\in S)
.
\]
In addition to differentiation operators, we define for $u\in \calU_\per(\calL_1)$, the translation operator
$T_x u\in \calU_\per(\calL_1)$
\[
T_{x,r} u(x) := u(x+\eps r)
\qquad(\text{for }x\in S,~r\in\bbR^d \text{~~such that }x+\eps r\in S)
.
\]

The definitions of the discrete derivative and translation generalize to functions of two variables by considering the partial discrete derivative and translation operators; i.e.,
$D_{x,r},T_{x,r}$ applied to $u(\bullet,y)$ and
$D_{y,r},T_{y,r}$ applied to $u(x,\bullet)$.

In homogenization we consider ``traces on diagonal'' of functions of two variables, $v=v(x,\smfrac x\eps)$.
For such functions we introduce full translation and full derivative operators $T_r := T_{x,r} T_{y,r}$, $D_r := \smfrac1\eps (T_r-I)$ so that
\begin{equation}
\label{eq:full_derivative_relation}
(T_r u)|_{y=\smfrac x\eps} = T_{x,r} \Big(u|_{y=\smfrac x\eps}\Big),
\quad
\text{and}
\qquad
(D_r u)|_{y=\smfrac x\eps} = D_{x,r} \Big(u|_{y=\smfrac x\eps}\Big)
.
\end{equation}
The following relates the partial and the full derivatives:
\begin{align} \notag
D_r
=~& \smfrac1\eps (T_{x,r} T_{y,r}-I)
\\ =~& \notag
\smfrac1\eps (T_{x,r} T_{y,r}-T_{y,r}) + \smfrac1\eps (T_{y,r} - I)
\\ =~& \label{eq:full_and_partial}
D_{x,r} T_{y,r} + \smfrac 1\eps D_{y,r}
.
\end{align}

Notice that the variables $x$ and $y$ are not symmetric in the definition of full derivative.
If a function does not depend on $y$ then the full derivative coincides with the derivative in $x$ (likewise for the translation).
Hence, for functions of $x$ only, we sometimes omit the subscript $x$ in the operators $D_{x,r}$ and $T_{x,r}$.

For continuous functions we denote $\nabla u$ a gradient of $u$ and $\nabla_r u=(\nabla u)\cdot r$ a directional derivative.
For a vector-valued function $u$, the directional derivative, $\nabla_r u$ is defined componentwise and the gradient $\nabla u$ is a matrix such that $\nabla_r u = (\nabla u)r$.

\subsection{Functions of Vector-indexed Variables}\label{sec:notations:vector_indexed}

We consider a general form of interaction, where the energy of each atom depends arbitrarily on relative displacements of all the nearby atoms.
Namely, for the ``interaction neighborhood'' $\calR=\{r_1,\ldots,r_k\}$ we consider functions
\[
V(D_{r_1} u, D_{r_2} u, \ldots, D_{r_k} u).
\]
Since the interaction neighborhood may be different for different atoms (recall that we consider multilattices) and contain different number of neighbors $k$, we index derivatives directly with $r\in\calR$.
That is, we use the following notation for tuples $\alpha$ indexed with $r\in\calR$:
\[
(\alpha_r)_{r\in\calR} := (\alpha_{r_1},\ldots,\alpha_{r_k})
\quad \text{for } \calR=\{r_1,\ldots,r_k\}
\]
and define
\[
D_\calR u := (D_r u)_{r\in\calR},
\qquad
\nabla_\calR u := (\nabla_r u)_{r\in\calR}.
\]
Thus, for the functions of $\calR$-indexed tuples we write
\[
V(D_\calR u) := V(D_{r_1} u, D_{r_2} u, \ldots, D_{r_k} u).
\]

The common algebraic operations on $\calR$-indexed tuples are taken componentwise, e.g.:
\begin{equation}
\label{eq:vector_indexed}
D_\calR u + D_\calR v = (D_r u+D_r v)_{r\in\calR},
\qquad
\mF \calR = (\mF r)_{r\in\calR}
\quad\text{ etc.},
\end{equation}
which is fully analogous to the algebraic operations on $k$-dimensional vectors.

A partial derivative of $V(D_\calR u)$ with respect to $D_r u$ ($r\in\calR$) is denoted by $V'_r(D_\calR u)$.

\bibliographystyle{siam}
\bibliography{atm-hmg}

\end{document}